\newtheorem{theorem}{Theorem}[section]
\newtheorem{lemma}[theorem]{Lemma}
\newtheorem{proposition}[theorem]{Proposition}
\newtheorem{observation}[theorem]{Observation}
\newtheorem{corollary}[theorem]{Corollary}
\theoremstyle{definition}
\newtheorem{definition}[theorem]{Definition}
\theoremstyle{remark}
\newtheorem{remark}[theorem]{Remark}
\newcommand{\ta}{{\rm tail}\hskip0.01cm}
\newcommand{\he}{{\rm head}\hskip0.01cm}
\newcommand{\pr}{{\rm problem}}
\newcommand{\ex}{{\rm excess}}
\newcommand{\rank}{{\rm rank}\hskip0.02cm}
\def\red{\mathbb{R}^E}
\def\rv{\mathbb{R}^V}
\newcommand{\tp}{{\rm TP}\hskip0.02cm}
\newcommand{\tc}{{\rm TC}\hskip0.02cm}
\def\f2{\mathbb{F}_2}
\def\lip{\hskip0.02cm{\rm Lip}\hskip0.01cm}
\def\supp{\hskip0.02cm{\rm supp}\hskip0.01cm}
\newcommand{\ep}{\varepsilon}
\newcommand{\1}{\mathbf{1}}
\begin{document}

\title{Isometric structure of transportation cost spaces on finite metric spaces}

\author{Sofiya Ostrovska and Mikhail I.~Ostrovskii\footnote{Corresponding author, \texttt{ostrovsm@stjohns.edu}}}

\date{\today}
\maketitle

\noindent{\bf Abstract:}  The paper is devoted to isometric
Banach-space-theoretical structure of transportation cost (TC)
spaces on finite metric spaces. The TC spaces are also known as
Arens-Eells, Lipschitz-free, or Wasserstein spaces. A new notion
of a roadmap pertinent to a transportation problem on a finite
metric space has been introduced and used to simplify proofs for
the results on representation of TC spaces as quotients of
$\ell_1$ spaces on the edge set over the cycle space. A
Tolstoi-type theorem for roadmaps is proved, and
 directed subgraphs of the canonical graphs, which are
supports of maximal optimal roadmaps, are characterized. Possible
 obstacles for a TC space on a finite metric space $X$ preventing them from containing
subspaces isometric to $\ell_\infty^n$ have been found in terms of
the canonical graph of $X$. The fact that TC spaces on diamond
graphs do not contain $\ell_\infty^4$ isometrically has been
derived. In addition, a short overview of known results on the
isometric structure of TC spaces on finite metric spaces is
presented.
\medskip

\noindent{\bf Acknowledgement:} The first-named author gratefully
acknowledges the support by Atilim University. This paper was
written while the first-named author was on research leave
supported by Atilim University. The second-named author gratefully
acknowledges the support by the National Science Foundation grant
NSF DMS-1953773.

\begin{large}

\section{Basic definitions and results}

The theory of transportation cost spaces   launched by Kantorovich
and Gavurin \cite{Kan42,KG49} initially had been developed  as a
study of special norms pertaining to function spaces  on finite
metric spaces. However, its further progress turned into the
direction of infinite and continuous setting rather than the
discrete one in papers of Kantorovich and Rubinstein
\cite{Kan42,Kan48,KR57,KR58}, and this stream has become dominant.
See \cite{ABS21, AGS08, FG21, Gar18, KA82, Vil03, Vil09, Wea18}.

Nevertheless, researches within Theoretical Computer Science on
the transportation cost, which computer scientists renamed to
\emph{earth mover distance} \cite{RTG98}, along with  some of the
recent works in metric geometry and Banach space theory focus on
the case of finite metric spaces bringing this area back into
spotlight. See, for example,  \cite{AFGZ21, BMSZ20+, Cha02,
DKO20,DKO21,IT03, KKMR09,KMO20,KN06,Nao21,NR17,NS07,RTG98}. More
details on the history of the subject can be found in
\cite[Chapter 3]{Vil09} and \cite[Section 1.6]{OO19}.

This work deals with the isometric theory of transportation cost
spaces on finite metric spaces. It has to be pointed out that the
transportation cost spaces have been studied from different
perspectives and found numerous applications in various
disciplines. In this connection, there exists a relatively broad
assortment of terms for the same notion. We follow the terminology
going back to Kantorovich up to adding new concepts and names. In
our opinion, this terminology provides the most intuitive
description of the area and, as such, makes it more accessible and
attractive to both mathematicians and researchers working with
practical applications. We presented more arguments in favor of
this selection are in \cite[Section 1.6]{OO19}.

Apart from presenting new results (mentioned in the abstract, see
Sections \ref{S:Digr} and \ref{S:Linfty} for more details), our
goal in this paper is to demonstrate how the convenient
terminology and notation allows to simplify the proofs of some
already available results. For the convenience of the reader, we
present all of the necessary definitions related to the
transportation cost spaces in this article, although most of them
can be found in \cite[Section 1.6]{OO19}.

Let $(X,d)$ be a metric space. Consider a real-valued finitely
supported function $f$ on $X$ with a zero sum, that is,
$\sum_{v\in \supp f}f(v)=0$. A natural and important
interpretation of such a function is considering it as a
\emph{transportation problem}: one needs to transport certain
product from locations where $f(v)>0$ to locations where $f(v)<0$.

One can easily see that a transportation problem $f$ can be
represented as
\begin{equation}\label{E:TranspPlan} f=a_1(\1_{x_1}-\1_{y_1})+a_2(\1_{x_2}-\1_{y_2})+\dots+
a_n(\1_{x_n}-\1_{y_n}),\end{equation} where $a_i\ge 0$,
$x_i,y_i\in X$, and $\1_u(x)$ for $u\in X$ is the {\it indicator
function} of $u$. We call each such representation a
\emph{transportation plan} for $f$, it can be interpreted as a
plan of moving $a_i$ units of the product from $x_i$ to $y_i$. A
pair $x_i,y_i$ for which $a_i>0$ will be called a
\emph{transportation pair}. In some contexts we shorten
``transportation plan'' to \emph{plan} if it does not cause any
confusion.

The \emph{cost} of the transportation plan \eqref{E:TranspPlan} is
defined as $\sum_{i=1}^n a_id(x_i,y_i)$.

We denote the real vector space of all transportation problems by
$\tp(X)$. We introduce the \emph{transportation cost norm} (or
just \emph{transportation cost}) $\|f\|_{\tc}$ of a transportation
problem $f$ as the infimum of costs of transportation plans
satisfying \eqref{E:TranspPlan}.

The completion of $\tp(X)$ with respect to $\|\cdot\|_\tc$ is
called the \emph{transportation cost space} $\tc(X)$. Note that
though for finite metric spaces $X$, the spaces $\tc(X)$ and
$\tp(X)$ coincide as sets, we mostly use the notation $\tc(X)$ to
emphasize that we consider it as a normed space.

It has to be pointed out  that, in our discussion, transportation
plans are allowed to be {\it fake plans}, in the sense that it can
happen that there is no product in $x_i$ in order to make the
delivery to $y_i$. Such fake plans will be used in the proof of
Theorem \ref{T:Tolstoi}.
\medskip

We start with the next statement which is an easy consequence of
the triangle inequality.

\begin{proposition}\label{P:TC_attain} The infimum of costs of transportation plans for $f$ is attained, and this happens for some transportation plan
with $\{x_i\}=\{v:~f(v)>0\}$ and $\{y_i\}=\{v:~f(v)<0\}$.
\end{proposition}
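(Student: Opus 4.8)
The plan is to prove the proposition in two stages. Write $P=\{v:f(v)>0\}$ and $Q=\{v:f(v)<0\}$; both are finite since $f$ is finitely supported. Call a transportation plan \emph{canonical} if each source $x_i$ lies in $P$ and each sink $y_i$ lies in $Q$. The first stage shows that every plan can be replaced by a canonical plan of no larger cost, so that $\|f\|_\tc$ equals the infimum of costs taken over canonical plans alone. The second stage shows that this restricted infimum is attained, via a compactness argument, and that the minimizer automatically uses every point of $P$ as a source and every point of $Q$ as a sink.

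For the first stage, I would view a plan $f=\sum_{i=1}^n a_i(\1_{x_i}-\1_{y_i})$ as a nonnegative flow on the finite complete directed graph whose vertex set is the union of $\supp f$ with all the points $x_i,y_i$: send $a_i$ units along the arc from $x_i$ to $y_i$. Computing the net outflow at a vertex $v$ gives exactly $f(v)$, so the flow has prescribed divergence $f$. Applying the standard flow-decomposition theorem, I would write this flow as a sum of flows along finitely many directed paths, each running from a vertex of positive divergence (hence a point of $P$) to a vertex of negative divergence (a point of $Q$), together with flows along finitely many directed cycles. Since the cost is linear in the arc flows, the cost of the original plan equals the sum of the costs of these path- and cycle-flows. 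Here the triangle inequality enters: a path-flow of mass $\alpha$ running through $p\in P,\,v_1,\dots,v_k,\,q\in Q$ costs $\alpha(d(p,v_1)+\dots+d(v_k,q))\ge \alpha\, d(p,q)$, while every cycle-flow has nonnegative cost. Replacing each path-flow by a direct transport of its mass from its $P$-endpoint to its $Q$-endpoint, and discarding the cycle-flows, produces a canonical plan; the divergence bookkeeping guarantees that it still represents $f$, and its cost does not exceed that of the original plan. This is the technical heart of the argument, and the main thing to get right is the accounting in the flow decomposition.

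For the second stage, I would encode a canonical plan by its transportation matrix $(t_{pq})_{p\in P,\,q\in Q}$ with $t_{pq}\ge 0$, subject to $\sum_{q}t_{pq}=f(p)$ and $\sum_{p}t_{pq}=-f(q)$; these constraints are consistent because the zero-sum condition makes total supply equal total demand, so the feasible set is a nonempty closed bounded polytope in a finite-dimensional space, hence compact. The cost $\sum_{p,q}t_{pq}\,d(p,q)$ is a continuous (indeed linear) function, so it attains a minimum $M$ on this polytope; combined with the first stage, $\|f\|_\tc=M$ is attained. Finally, at a minimizer each constraint $\sum_{q}t_{pq}=f(p)>0$ forces some $t_{pq}>0$, so every $p\in P$ occurs as a source, and symmetrically every $q\in Q$ occurs as a sink; since no other points are used, the optimal plan satisfies $\{x_i\}=P$ and $\{y_i\}=Q$, as required.
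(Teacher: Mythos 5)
Your proof is correct. The paper gives no proof of this proposition at all---it is merely introduced as ``an easy consequence of the triangle inequality''---and your two-stage argument (flow decomposition plus the triangle inequality to reduce to plans with sources in $\{v:f(v)>0\}$ and sinks in $\{v:f(v)<0\}$, then compactness of the resulting transportation polytope to attain the minimum and to force every supply and demand point to be used) is a complete and valid rendering of exactly the triangle-inequality idea the authors have in mind, so there is no divergence from the paper to report.
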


A transportation plan for $f$ of the minimal cost, that is, whose
cost equals $\|f\|_\tc$ is called  an \emph{optimal transportation
plan} for $f\in\tp(X)$.

In the sequel,  a finite metric space $X$ will be identified with
an apposite weighted graph. This weighted graph will be specified
uniquely according to the procedure described below and -
following  \cite{AFGZ21} - will be called the \emph{canonical
graph associated with the metric space}. The graph is defined as
follows. First, we consider a complete weighted graph with
vertices in $X$, and the weight of an edge $uv$ ($u,v\in X$)
defined as $d(u,v)$. After that we delete all edges $uv$ for which
there exists a vertex $w\notin\{u,v\}$ satisfying
$d(u,w)+d(w,v)=d(u,v)$. It is easy to see that, for a finite
metric space, it is a well-defined procedure leading to a uniquely
determined by $X$ weighted graph, which we denote $G(X, E)$, where
$E=E(X)$ is the edge set of the canonical graph.

As a result, the constructed in this way graph $G(X, E)$ possesses
the following  feature, which is crucial for its application in
the theory of the transportation cost spaces:
 the weighted graph distance of
$G(X, E)$ on $X$ coincides with the original metric on $X$. Note
that if $X$ is defined initially as a weighted graph with its
weighted graph distance, then the corresponding canonical graph
can be different because some edges can be dropped. However, if we
start with a simple unweighted graph endowed with the graph
distance, we will recover it as a canonical graph.

For a metric space $X$, let $\ell_{1,d}(E)=\ell_{1,d}(E(X))$ be
the weighted $\ell_1$-space on the edge set of the canonical graph
$G(X,E)$ associated with $X$. The weight of an edge $uv$ is
$d(u,v)$. The space $\ell_{1,d}(E)$ consists of real-valued
functions $\beta:E\to \mathbb{R}$ with the norm
\begin{equation}\label{E:WeiEll1}\|\beta\|_{1,d}:=\sum_{uv\in
E(G)} |\beta_{uv}|d(u,v).\end{equation} It is easy to see that if
$X$ is an unweighted graph with its graph distance, then
$\ell_{1,d}(E)$ is just the vector space $\mathbb{R}^E$ of
functions on the edge set of $X$  with its $\ell_1$-norm.

We introduce the space $\ell_{1,d}(E)$ because, for a weighted
graph, it is natural and practical to consider the following, more
detailed version of transportation plans.

In order to proceed,   an orientation on the edge set $E$ has to
be specified. This  means that we fix a \emph{direction} on the
edges by selecting  the \emph{head} and \emph{tail} of for each
edge. The orientation can be chosen arbitrarily as the notions
important for further reasoning
 do not depend on the choice of this orientation. We call
this orientation a \emph{reference orientation}.

Notation: if $u$ is a tail and $v$ is a head of a directed edge
$e=uv$, we denote the directed edge either $\overrightarrow{uv}$
or $\overleftarrow{vu}$. We also write $u=\ta(e)$ and $v=\he(e)$.

Fix a reference orientation on $G(X,E)$ and consider
$p\in\ell_{1,d}(X)$. We call $p$ a \emph{roadmap} for the reasons
which will be explained below. For each such $p$, we introduce the
function
\begin{equation}\label{E:Problem}
\pr_p(v)=\sum_{\ta(e)=v}p(e)-\sum_{\he(e)=v}p(e)\in\tp(X).
\end{equation}
Note that the function is in $\tp(X)$ because each edge is
included in the right-hand side of \eqref{E:Problem} exactly twice
- once for its head and once for its tail.

It can be readily seen that  $p$ provides an inherent
transportation plan associated with $\pr_p$, namely, the
plan

\begin{equation}\label{E:TPforRDmap}\pr_p=\sum_{e\in\supp
p}p(e)\left(\1_{\ta(e)}-\1_{\he(e)}\right),\end{equation}

\begin{remark}\label{R:CostRdmap} It should be emphasized   that the cost of this
transportation plan is exactly $\|p\|_{1,d}$. This norm is also called
the \emph{cost of roadmap $p$}, because we can identify $p$ and
 transportation plan
\eqref{E:TPforRDmap} by virtue of the canonical correspondence
between those objects.
\end{remark}

\begin{remark}\label{R:Excess} The function $\pr_p(v)$ is a negation of the
well-known in combinatorial optimization (see
\cite[p.~149]{Sch03}) \emph{excess function}: $\ex_p= - \,\pr_p$.
\end{remark}

We use the term \emph{roadmap} because to organize the
transportation (we use intuitive rather than mathematical
language) according to a transportation plan, one has to identify
edges used for transportation and to specify the direction on
them.

Namely, we do the following for the transportation plan given by
the right-hand side of \eqref{E:TranspPlan}, which we denote by
$P$.

\begin{enumerate}[{\rm (1)}]

\item If $x_iy_i$ is not an edge in $E$, we introduce a shortest
path $u_{i,0}=x_i, u_{i,1}, \dots, u_{i,m(i)}=y_i$ joining $x_i$
and $y_i$ in $G$, and replace $a_i(\1_{x_i}-\1_{y_i})$ in plan $P$
by
\[\sum_{j=1}^{m(i)}a_i(\1_{u_{i,j-1}}-\1_{u_{i,j}}).\]

Observe that this step is uniquely determined if and only if for
each transportation pair $x_i,y_i$ of plan $P$ there is one
shortest path between $x_i$ and $y_i$ in $G(X,E)$.

As a result, we get a plan in which every transportation pair is
the pair of ends of an edge, and so it is almost of the form shown
in \eqref{E:TPforRDmap}. We use the word ``almost'' because some
pairs of ends of an edge can repeat in the sum.

\item \label{I:Simpl} We combine all terms corresponding to the
same edge in the obtained transportation plan. We get a
transportation plan for the same problem whose cost does not
exceed the original plan's cost (there can be some cancellations
that will decrease the cost) and for which all transportation
pairs are edges.

\item\label{I:AllEdges} If all transportation pairs are edges, we
map the transportation plan
\[a_1(\1_{x_1}-\1_{y_1})+a_2(\1_{x_2}-\1_{y_2})+\dots+
a_n(\1_{x_n}-\1_{y_n})\] onto the roadmap which on the edge
$x_iy_i$ takes value $a_i$ if the reference orientation of
$x_iy_i$ is $\overrightarrow{x_iy_i}$ and value $-a_i$ if the
reference orientation of $x_iy_i$ is $\overleftarrow{x_iy_i}$. For
edges $uv$ which are not transportation pairs the value of the
roadmap is $0$.

\end{enumerate}

\begin{remark}[Conclusion of the above
construction]\label{R:RDmapForTP} For each transportation plan $P$
for $f\in\tp(X)$, there is a naturally defined (but not uniquely
determined) roadmap $p$ which can be regarded as an implementation
of $P$ and whose cost does not exceed the cost of $P$.
\end{remark}

These definitions and remarks lead to what we consider very
transparent and straightforward proof of the following
proposition.

\begin{proposition}\label{P:QZ} The quotient representation $\tc(X)=\ell_{1,d}(E)/Z$, where $Z$ is the cycle space in $\ell_{1,d}(E)$, holds.
\end{proposition}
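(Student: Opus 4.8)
The plan is to realize $\tc(X)$ as a quotient by producing a natural linear surjection $\Phi\colon\ell_{1,d}(E)\to\tp(X)$ whose kernel is precisely the cycle space $Z$, and then to check that the quotient norm it induces agrees with $\|\cdot\|_\tc$. Concretely, I would define $\Phi(p)=\pr_p$ using \eqref{E:Problem}; this map is visibly linear in $p$, and the comment following \eqref{E:Problem} (each edge contributing once through its head and once through its tail) already guarantees that $\Phi(p)\in\tp(X)$. Once $\Phi$ is shown to be onto with $\ker\Phi=Z$, it descends to a linear isomorphism $\overline{\Phi}\colon\ell_{1,d}(E)/Z\to\tp(X)$, and the whole content of the proposition reduces to verifying that $\overline{\Phi}$ is isometric.

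First I would establish surjectivity and identify the kernel. For surjectivity, given $f\in\tp(X)$ I pick any transportation plan $P$ for $f$; the construction summarized in Remark \ref{R:RDmapForTP} yields a roadmap $p$ implementing $P$, and since $p$ realizes the same displacement of mass as $P$, comparing \eqref{E:Problem} with \eqref{E:TPforRDmap} gives $\pr_p=f$ directly. (Equivalently, connectedness of the canonical graph—automatic since the metric is finite-valued—makes the boundary-type map $\Phi$ hit every zero-sum function.) For the kernel, a roadmap $p$ satisfies $\pr_p=0$ exactly when $\sum_{\ta(e)=v}p(e)=\sum_{\he(e)=v}p(e)$ at every vertex $v$, i.e.\ when $p$ is a circulation; this is the defining property of the real cycle space, so $\ker\Phi=Z$.

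It remains to prove that $\overline{\Phi}$ preserves norms, which amounts to the identity
\[\|f\|_\tc=\inf\{\|q\|_{1,d}:\pr_q=f\}=\inf_{z\in Z}\|p+z\|_{1,d}\quad(\text{for any fixed }p\text{ with }\pr_p=f).\]
For the inequality ``$\le$'', any roadmap $q$ with $\pr_q=f$ carries the transportation plan \eqref{E:TPforRDmap}, whose cost equals $\|q\|_{1,d}$ by Remark \ref{R:CostRdmap}; this is a genuine plan for $f$, so $\|f\|_\tc\le\|q\|_{1,d}$, and taking the infimum over such $q$ gives the bound. For ``$\ge$'', given a plan $P$ for $f$, Remark \ref{R:RDmapForTP} produces a roadmap $p$ with $\pr_p=f$ and $\|p\|_{1,d}\le\mathrm{cost}(P)$, whence $\inf\{\|q\|_{1,d}:\pr_q=f\}\le\mathrm{cost}(P)$; taking the infimum over all plans $P$ yields $\le\|f\|_\tc$. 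The two inequalities give the isometry and complete the identification $\tc(X)=\ell_{1,d}(E)/Z$.

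The step I expect to require the most care is the ``$\ge$'' direction of the norm identity, since it rests on the fact that the roadmap extracted from a transportation plan in the construction preceding Remark \ref{R:RDmapForTP} both reproduces the \emph{same} problem $f$ and has cost no larger than that of the plan—the possible strict decrease arising from cancellations when terms on a common edge are combined. The second point to state cleanly is that $\ker\Phi$ equals the cycle space rather than merely containing it; this is the standard characterization of circulations as the kernel of the boundary map, but it is worth recording explicitly so that the quotient $\ell_{1,d}(E)/Z$ in the statement is literally $\ell_{1,d}(E)/\ker\Phi$.
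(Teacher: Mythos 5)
Your proposal is correct and follows essentially the same route as the paper: the same map $p\mapsto\pr_p$, surjectivity and the ``$\le$'' bound via Remark \ref{R:RDmapForTP} together with Remark \ref{R:CostRdmap}, the kernel identified with $Z=\ker D$ through \eqref{E:DF}, and the quotient-norm identity (the paper phrases the ``$\ge$'' direction using the attained optimal plan from Proposition \ref{P:TC_attain}, while you take infima on both sides, which is an inessential difference). No gaps.
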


As far as we know, Proposition \ref{P:QZ} (in the case of
unweighted graphs) was for the first time proved \cite[Proposition
10.10]{Ost13}. Later, different versions of the proof were
published in \cite{OO20}, \cite{AFGZ21}, \cite{DKO21}. Our version
of the proof can be regarded as a generalization of the proof in
\cite{DKO21} for unweighted graphs to the case of metric spaces.

Prior to presenting this version, let us remind the notion of the
cycle space and other related concepts of the algebraic graph
theory, used  in the subsequent parts of this paper.

The following definition, as a rule, is used for an arbitrary
oriented finite graph $G=(V, E)$, not necessarily connected. We
are going to use it for canonical graphs $G=(X, E)$ with reference
orientation.

Denote by $\red$ and $\rv$ the spaces of real-valued functions on
the edge set and the vertex set, respectively.

\begin{definition}\label{D:CycleSpEtc}
The \emph{incidence matrix} $D$ of an oriented graph $G$ is
defined as a matrix whose rows are labelled using vertices of $G$,
columns are labelled using edges of $G$, and the $ve$-entry is
given by
\[
d_{ve}=\begin{cases} 1, & \hbox{ if }v=\he(e),\\
-1, & \hbox{ if }v=\ta(e),\\
0, & \hbox{ if $v$ is not incident to }e.\end{cases} \]

Interpreting elements of $\red$ and $\rv$ as column vectors, we
may regard $D$ as a matrix of a linear operator $D:\red\to \rv$.
We also consider the transpose matrix $D^T$ and the corresponding
operator $D^T:\rv\to \red$.\medskip

It is easy to describe $\ker D^T$. In fact, for $f\in \rv$ the
value of $D^T(f)\in \red$ at an edge $e$ is $f(\he(e))-f(\ta(e))$,
therefore $f\in\ker D^T$ if and only if it has the same value at
the ends of each edge. It is clear that this happens if and only
if $f$ is constant on each of the connected components of $G$.
Therefore the ranks of the operators $D^T$ and $D$ are equal to
$|V|-c$, where $c$ is the number of connected components of
$G$.\medskip

Observe that
\begin{equation}\label{E:DF}(Dp)(v)=-\,\pr_p(v)\end{equation} for $p\in  \red$. We
let \begin{equation}\label{E:DefZ} Z=\ker D.\end{equation} This
subspace of $\red$ is called the \emph{cycle space}\index{cycle
space} or \emph{cycle subspace}.\index{cycle subspace} The name
was chosen because, as we explain below, the set of signed
indicator functions of all cycles spans this subspace.\medskip

Now we consider a cycle $C$ in $G$ (in a graph-theoretical sense).
We consider one of the two possible orientations of $C$ satisfying
the following condition: each vertex of $C$ is a head of exactly
one edge and a tail of exactly one edge. Now we introduce the
\emph{signed indicator function} $\chi_C\in \red$ of the oriented
cycle\index{signed indicator function of an oriented cycle} $C$
(in an oriented graph $G$) by
\begin{equation}\label{E:SignInd}
\chi_C(e)=\begin{cases} 1 & \hbox{ if }e\in C\hbox{ and its
orientations in $C$ and $G$ are the same}\\
-1 & \hbox{ if }e\in C\hbox{ but its orientations in $C$ and $G$
are different}
\\
0 & \hbox{ if }e\notin C.
\end{cases}\end{equation}

It is immediate from \eqref{E:TPforRDmap} and \eqref{E:DF} that
$\chi_C\in\ker D$. Let us show that signed indicator functions of
cycles span $\ker D$. To see this we observe that $\dim(\ker
D)=|E(G)|-\rank(D)=|E(G)|-|V(G)|+c$. So it remains to show that
there are $|E(G)|-|V(G)|+c$ \,cycles $\{C(i)\}_i$ such that the
functions $\{\chi_{C(i)}\}_i$ are linearly independent.\medskip
\medskip

To achieve this goal, consider a maximal subset of edges of $G$
which does not form any cycles. Such collection of edges of $G$
induces a subgraph $H$ of $G$ which is called a \emph{spanning
forest of $G$}; in the case where $G$ is connected, it is called a
\emph{spanning tree of $G$}. Notice that we use these terms in a
slightly different way than it can be found in the literature.
This discrepancy stems from different possible treatments of
 the graph's
isolated vertices.  Since we  do not need to consider graphs with
isolated vertices, this issue is of no importance here. It is a
well-known fact that the number of edges in a spanning forest is
$|V(G)|-c$. Therefore, the number of the remaining edges is
$|E(G)|-|V(G)|+c$. Now we construct the cycles $C(i)$, $i\in
E(G)\backslash E(H)$, with linearly independent $\{\chi_{C(i)}\}$
as follows. Each $i\in E(G)\backslash E(H)$ forms a cycle together
with some edges of $H$. One can show that such a cycle is uniquely
determined, but at this point, it does not matter. We pick one
such cycle and denote it $C(i)$. The fact that $\{\chi_{C(i)}\}$
are linearly independent is immediate because the value of the
linear combination $\sum_i a_i\chi_{C(i)}$ at an edge $i$ is equal
to $\pm a_i$.\end{definition}

\begin{proof}[Proof of Proposition \ref{P:QZ}] Our idea of the proof is very
straightforward: to  apply the made above observation that using
the reference orientation, any element $p$ of $\ell_{1,d}(E)$ can
be regarded as a roadmap for some transportation problem, namely,
for $f=\pr_p$. In such a way, one gets a natural linear map
$Q:\ell_{1,d}(E)\to \tc(X)$.

Remark \ref{R:RDmapForTP} shows that this map is surjective and
that it has norm $\le 1$. Finally, the fact that, for each
$f\in\tc(X)$, there is an optimal transportation plan $P$ with
cost $\|f\|_\tc$, implies that the roadmap $p$ assigned to $P$ -
see Remark \ref{R:RDmapForTP} -  satisfies $\|p\|_{1,d}=\|f\|_\tc$
(the inequality $\|p\|_{1,d}<\|f\|_\tc$ would contradict the
definition of an optimal transportation plan). Thus
$Q:\ell_{1,d}(E)\to (\tc(X), \|\cdot\|_\tc)$ is a quotient map.
Finally, combining \eqref{E:DF} with the definition \eqref{E:DefZ}
of $Z$, we conclude that $Z=\ker Q$.\end{proof}

\begin{remark} There exist analogues of Proposition \ref{P:QZ}
for some infinite metric spaces, see \cite{CKK17,GL18}.
\end{remark}

Proposition \ref{P:QZ} implies that one can improve a non-optimal
roadmap for a given transportation problem by adding an element of
the cycle space. Our next target is to prove a ramification of
this fact, stating that the roadmap can be improved by adding a
multiple of a cycle. This is exactly the assertion of Theorem
\ref{T:Tolstoi}  in Section \ref{S:Tolstoi}.
\medskip

The idea of this result goes back to Tolstoi \cite{Tol30,Tol39},
see \cite[Theorem 12.1, Theorem 12.3, Theorem 21.12, and
pp.~362--366]{Sch03} for interesting related information. Tolstoi
\cite{Tol30,Tol39} was the first one who proposed describing the
optimality conditions for plans via nonexistence of ``negative''
cycles. The listed above theorems in \cite{Sch03} describe such
conditions of optimality in different settings.

\subsection{An overview of the isometric theory of $\tc(X)$ for
finite $X$}\label{S:Survey}

Since the isometric theory of transportation cost spaces on finite
metric spaces is quite recent, it is beneficial not only for the
presentation of this study but also for further developments in
this area to precede the exposition of new findings with a short
survey of the available ones.

\begin{enumerate}[{\bf (1)}]

\item \label{I:Godard} The space $\tc(X)$ is isometric to
$\ell_1^n$ if and only if $X$ is isometric to a weighted tree on
$n+1$ vertices. Godard \cite{God10} proved the ``if'' part. In the
``only if'' direction, the paper \cite{God10} contains only a
partial result that $X$ embeds into a tree. The ``only if'' result
can be easily derived from the description of extreme points in
the unit ball of $\tc(X)$ as vectors of the form
$(\1_u-\1_v)/d(u,v)$, where $u$ and $v$ such that all triangle
inequalities of the form
\[d(u,z)+d(z,v)\ge d(u,v)\]
are strict for $z\notin\{u,v\}$. The result is obtained by
comparing the number of extreme points in $\ell_1^{n+1}$ and
$\tc(X)$, see \cite[Proposition 26]{DKO21} for details. It should
be remarked here  that the study of extreme points in $\tc(X)$ was
apparently initiated by Weaver in the first edition of his book
\cite{Wea99}, and the mentioned above elementary description of
extreme points in the unit ball of $\tc(X)$ for finite $X$ is a
part of folklore in this study.

\item Isometric description of $\tc(X)$ as a quotient of
$\ell_1(E)$ over the cycle space was obtained in \cite{Ost13} in
the case when  $X$ is an unweighted graph. This result was
generalized to the case of weighted graphs in \cite{OO20}.
Alternative proofs of it were obtained in \cite{AFGZ21},
\cite{DKO21}, and this paper.

\item The study of isometric embeddability of $\ell_1$ into
$\tc(X)$ for infinite $X$ was initiated by C\'uth and Johanis
\cite{CJ17}. However, the following seems to be the first
isometric result in the case of finite $X$:

\begin{theorem}[{{\rm Khan, Mim, and Ostrovskii }\cite{KMO20}}]\label{T:2npts} If a metric space $M$ contains $2n$ elements, then $\tc(M)$ contains a
$1$-complemented subspace isometric to $\ell_1^n$. If the space
$M$ is such that triangle inequalities for all distinct triples in
$M$ are strict, then $\tc(M)$ does not contain a subspace
isometric to $\ell_1^{n+1}$.
\end{theorem}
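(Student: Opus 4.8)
The plan is to treat the two assertions separately, using throughout the duality $\tc(M)^*=\lip_0(M)$ valid for finite $M$, under which $\|f\|_{\tc}=\sup\{\langle f,\psi\rangle:\psi\ \text{is }1\text{-Lipschitz}\}$ with $\langle f,\psi\rangle=\sum_{v}f(v)\psi(v)$; norming functionals are attained since everything is finite-dimensional. I also use that for a transportation problem with unit masses the optimal cost (Proposition \ref{P:TC_attain}) equals the minimum weight of a bijection (bipartite matching) between its positive and its negative parts, as the transportation polytope has integral vertices.

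For the first assertion I would fix a \emph{minimum weight perfect matching} $\{(u_i,v_i)\}_{i=1}^n$ of the $2n$ points and set $\tau_i=\1_{u_i}-\1_{v_i}$, claiming $\operatorname{span}\{\tau_i\}$ is the desired copy of $\ell_1^n$ via $\tau_i\mapsto d(u_i,v_i)e_i$. The upper estimate $\|\sum_i a_i\tau_i\|_{\tc}\le\sum_i|a_i|d(u_i,v_i)$ is the direct transportation plan. For the reverse estimate, observe that for a sign pattern $\sigma\in\{\pm1\}^n$ the problem $\sum_i\sigma_i\tau_i$ has unit masses, so its cost is the minimum weight of a matching between its sources and sinks; every such source--sink matching is a perfect matching of all $2n$ points, hence has weight at least $\sum_i d(u_i,v_i)$ by minimality, while the original matching attains this value. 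Thus $\|\sum_i\sigma_i\tau_i\|_{\tc}=\sum_i d(u_i,v_i)$, and a norming $1$-Lipschitz $\psi$ forces $\psi(u_i)-\psi(v_i)=\sigma_i d(u_i,v_i)$ for every $i$; feeding this $\psi$ (for $\sigma=\operatorname{sign}a$) into the duality yields the reverse estimate for arbitrary coefficients, proving the isometry.

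The harder part is the $1$-complementation, which I would derive from biorthogonal functionals: one seeks $\phi_i\in\lip_0(M)$ with $\phi_i(u_j)-\phi_i(v_j)=\delta_{ij}d(u_i,v_i)$ and, crucially, with every signed sum $\sum_i\eta_i\phi_i$ ($\eta\in\{\pm1\}^n$) still $1$-Lipschitz. Given these, $P(f)=\sum_i\frac{\langle f,\phi_i\rangle}{d(u_i,v_i)}\tau_i$ is a projection onto the copy of $\ell_1^n$ with $\|P(f)\|_{\tc}=\sum_i|\langle f,\phi_i\rangle|=\langle f,\sum_i\eta_i\phi_i\rangle\le\|f\|_{\tc}$ for $\eta_i=\operatorname{sign}\langle f,\phi_i\rangle$, so $\|P\|=1$. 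Producing such $\phi_i$ is the main obstacle: this is exactly a norm-one linear section of the quotient $\lip_0(M)\twoheadrightarrow\ell_\infty^n$, equivalently a well-placed isometric copy of $\ell_\infty^n$ inside $\lip_0(M)$, and it is not automatic. Averaging the norming functions over $\sigma$ gives biorthogonality, but the resulting signed sums may have Lipschitz constant exceeding $1$, and the injectivity of $\ell_\infty^n$ only complements a copy once it is in place. The task is therefore to build the $\phi_i$ directly from the minimum-matching and metric data so that all signed sums remain $1$-Lipschitz.

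For the second assertion, strictness makes the canonical graph complete with every shortest path a single edge, and (as recalled above) the extreme points of the unit ball are exactly the vectors $\pm(\1_u-\1_v)/d(u,v)$. Suppose $w_1,\dots,w_{n+1}$ span an isometric $\ell_1^{n+1}$, normalized to $\|w_k\|_{\tc}=1$. For each $\sigma\in\{\pm1\}^{n+1}$ the equality $\|\sum_k\sigma_k w_k\|_{\tc}=n+1$ forces a common norming $1$-Lipschitz $\psi_\sigma$ with $\langle w_k,\psi_\sigma\rangle=\sigma_k$ for all $k$. Lift each $w_k$ to an optimal roadmap $p_k$ of cost $1$ (Proposition \ref{P:QZ}, Remark \ref{R:RDmapForTP}); since $\|\sum_k\sigma_k p_k\|_{1,d}$ must equal $n+1$ for every $\sigma$, no edge can carry two of the $p_k$, so the $p_k$ have pairwise disjoint edge-supports. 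Now I would use strictness: an optimal roadmap over a strict metric moves product directly from sources to sinks, never through an intermediate vertex. If $\supp w_k$ and $\supp w_l$ shared a vertex $v$, one can choose signs making $v$ a source of $\sigma_k w_k$ and a sink of $\sigma_l w_l$; then in the optimal combined roadmap $\sigma_k p_k+\sigma_l p_l$ (optimal because its cost $2$ equals $\|\sigma_k w_k+\sigma_l w_l\|_{\tc}$) product simultaneously leaves $v$ along a $p_k$-edge toward some $a$ and enters $v$ along a disjoint $p_l$-edge from some $b$, and rerouting $b\to a$ directly strictly lowers the cost, a contradiction. Hence the supports $\supp w_k$ are pairwise disjoint; each contains at least two points, forcing at least $2(n+1)>2n$ points overall, which is impossible. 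The crux here is the rerouting step, where strictness is indispensable.
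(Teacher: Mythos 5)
The paper itself does not prove Theorem \ref{T:2npts}: it is quoted from \cite{KMO20} in the survey Section \ref{S:Survey}, so there is no in-paper proof to compare with, and your proposal has to stand on its own. It does so for everything except the $1$-complementation, where there is a genuine gap --- one you yourself flag. You correctly reduce norm-one complementation of $Y=\operatorname{span}\{\tau_i\}$ to producing $\phi_i\in\lip_0(M)$ with $\phi_i(u_j)-\phi_i(v_j)=\delta_{ij}\,d(u_i,v_i)$ such that all $2^n$ signed sums $\sum_i\eta_i\phi_i$ are $1$-Lipschitz; this is equivalent to a norm-one linear section of the restriction map $\lip_0(M)\to Y^*\cong\ell_\infty^n$, hence equivalent to the assertion being proved. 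Your observations that averaging the norming functions $\psi_\sigma$ ruins the Lipschitz bound for $n\ge 3$ and that injectivity of $\ell_\infty^n$ gives extensions rather than sections are accurate, but they leave the construction of the $\phi_i$ --- which is the entire content of the complementation claim --- undone. The difficulty is not cosmetic: on the four-point space with $d(u_1,u_2)=\frac12$, $d(u_1,v_1)=d(u_2,v_2)=d(u_1,v_2)=d(u_2,v_1)=2$, $d(v_1,v_2)=\frac72$, the pairing $\{u_1v_1,u_2v_2\}$ is a minimum-weight matching, yet the obvious candidate projection $f\mapsto\sum_i\frac{f(u_i)-f(v_i)}{2}\tau_i$ sends $\1_{u_1}-\1_{u_2}$ (of norm $\frac12$) to an element of norm $2$, so it has norm at least $4$. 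A metrically informed construction of the $\phi_i$ from the minimum-matching data is required, and supplying it is where the actual work lies.

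The remainder of the proposal is sound. The isometric-copy argument (integrality of the transportation polytope for unit masses, minimality of the chosen perfect matching forcing $\|\sum_i\sigma_i\tau_i\|_\tc=\sum_i d(u_i,v_i)$, and termwise equalities for a norming Lipschitz function) is correct. The proof of the second assertion is correct and complete: the absence of cancellation in $\|\sum_k\sigma_kp_k\|_{1,d}=n+1$ for all sign patterns forces pairwise disjoint edge-supports; strictness of all triangle inequalities makes the canonical graph complete and forces every vertex in the support of an optimal roadmap to be a pure source or a pure sink (otherwise rerouting along the direct edge strictly lowers the cost); your sign-choice argument then makes the vertex supports of the $w_k$ pairwise disjoint, and $2(n+1)>2n$ gives the contradiction.
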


\item The paper \cite{KMO20} also contains examples of finite
metric spaces $X_3$ and $X_4$ such that $\tc(X_3)$ contains
$\ell_\infty^3$ isometrically and $\tc(X_4)$ contains
$\ell_\infty^4$ isometrically. The examples were simplified in
\cite{DKO21} where it was shown that one can take $X_3=K_{2,4}$
(complete bipartite graph) and $X_4=K_{4,4}$. Finally, in
\cite{AFGZ21} it was observed that $\tc(C_4)=\ell_\infty^3$.

Note that the problem on the existence of isometric copies of
$\ell_\infty^n$ in $\tc(X)$ can be regarded as an isometric
version of the famous Bourgain's problem (see \cite{Bou86} and the
discussion in \cite[Section 1.2.2]{Nao18}) on the cotype of
$\tc(\mathbb{R}^2)$.

\item Alexander, Fradelizi, Garc\'ia-Lirola, and Zvavitch
\cite{AFGZ21} characterized isometries of $\tc(X)$ in terms of the
canonical graph $G(X)$. They showed that isometries correspond to
cycle-preserving bijections of edge sets. This allowed them to use
the results of Whitney \cite{Whi33} (see also
\cite[Section~5.3]{Oxl11}) on classifications of cycle-preserving
bijections of edge sets. In particular, they proved that, for a
$3$-connected graph, isometries correspond to homotheties of the
graph - that is, the multiplication of all weights of edges by the
same positive number. Earlier, a similar result was obtained for
Sobolev spaces on graphs, see \cite{Ost07}.

\item The result of \cite{AFGZ21} on $\ell_1$-decompositions:
Decompositions of $\tc(X)$ of the form $\tc(X)=Z_1\oplus_1 Z_2$
correspond to representability of the canonical graph $X$ as a
union of two graphs $X_1$ and $X_2$ with one common vertex, such
that $Z_1=\tc(X_1)$ and $Z_2=\tc(X_2)$.

\item The  result of \cite{AFGZ21} on
$\ell_\infty$-decompositions: Decompositions of $\tc(X)$ of the
form $\tc(X)=Z_1\oplus_\infty Z_2$ imply that one of the summands
of $Z_1$  and $Z_2$ is one-dimensional, while the other is
$\ell_1^{n-1}$ and $X=K_{2,n}$ (unweighted complete bipartite
graph).

\item Findings of \cite{AFGZ21} reveal that transportation cost
spaces are significant examples for the studies related to the
Mahler conjecture on the volume product of a symmetric convex body
and its polar.
\end{enumerate}

\subsection{A new Tolstoi-type theorem}\label{S:Tolstoi}

 The setting of our Tolstoi-type theorem is different from the ones
in \cite{Sch03} and the proof method for  the ``only if''
direction is different as well. For this reason, we decided that
its publication will be useful for further studies of
transportation cost spaces. Meanwhile, the proof of  the ``if''
direction is easily established in all cases.

For a roadmap $p$, we introduce the \emph{induced by $p$ direction
(orientation) of edges which are in $\supp p$} as follows: If
$p(e)>0$, the orientation of the edge coincides with the reference
orientation; if $p(e)<0$ - it is the opposite to the reference
orientation.

Here comes our Tolstoi-type theorem.

\begin{theorem}\label{T:Tolstoi} Let $p$ be a roadmap for some transportation problem
$f$ on a weighted graph $G=(X,E)$. The roadmap $p$ is not optimal
if and only if the graph $G$ contains a directed cycle such that
the total weight of the edges in it which are in $\supp p$ and
whose induced by $p$ direction is opposite to their direction in
the cycle exceeds the total weight of other edges in the directed
cycle.
\end{theorem}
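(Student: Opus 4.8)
The plan is to treat the statement analytically, viewing the cost of a roadmap as a convex function on the family of all roadmaps for $f$ and examining its one-sided directional derivatives. Every roadmap for $f$ has the form $p+q$ with $q\in Z=\ker D$, since by \eqref{E:DF} one has $\pr_{p+q}=\pr_p$ exactly when $Dq=0$. Writing $w_e:=d(\ta(e),\he(e))$ for the weight of an edge $e$, I set $g(q):=\|p+q\|_{1,d}$ for $q\in Z$. This $g$ is convex, being a norm precomposed with an affine map, and by Proposition \ref{P:QZ} together with Proposition \ref{P:TC_attain} its minimum over $Z$ equals $\|f\|_\tc$; hence $p$ is optimal precisely when $g$ attains its minimum at $q=0$.

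For the ``if'' direction I would push a small amount of flow around the given directed cycle $C$, i.e.\ consider $g(t\chi_C)$ for small $t>0$, where $\chi_C$ is the signed indicator \eqref{E:SignInd} of $C$ (oriented as in the hypothesis). A direct computation gives
\[
\frac{d}{dt}\Big|_{t=0^+} g(t\chi_C)=\sum_{e\in C,\,p(e)\neq 0} \sign(p(e))\,\chi_C(e)\,w_e+\sum_{e\in C,\,p(e)=0} w_e .
\]
Checking the sign identity $\sign(p(e))\chi_C(e)=+1$ exactly when the induced and cycle directions agree and $=-1$ when they are opposite, this derivative equals the total weight of the agreeing and zero edges minus the total weight of the opposing edges. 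The hypothesis says precisely that this is negative, so by convexity $g(t\chi_C)<g(0)$ for small $t>0$: a strictly cheaper roadmap for $f$, whence $p$ is not optimal. The intermediate roadmaps may be fake plans, which is permitted.

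For the ``only if'' direction, suppose $p$ is not optimal. Then Propositions \ref{P:QZ} and \ref{P:TC_attain} produce a roadmap $p^*$ for $f$ with $\|p^*\|_{1,d}<\|p\|_{1,d}$, and $q_0:=p^*-p\in Z$ satisfies $g(q_0)<g(0)$. Convexity forces the right derivative $g'(0^+;q_0)\le g(q_0)-g(0)<0$, where
\[
g'(0^+;q_0)=\sum_{p(e)\neq 0}\sign(p(e))\,q_0(e)\,w_e+\sum_{p(e)=0}|q_0(e)|\,w_e .
\]
The key step is to decompose the circulation $q_0\in\ker D$ conformally into directed cycles, $q_0=\sum_j t_j\chi_{C_j}$ with $t_j>0$ and, on every edge, all participating cycles oriented according to $\sign(q_0(e))$ (the standard flow-decomposition theorem for circulations). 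Conformality is exactly what lets me split the nonlinear second sum: on edges with $p(e)=0$ one has $|q_0(e)|=\sum_{j:\,e\in C_j}t_j$ with no cancellation, while the first sum splits automatically by linearity. Reindexing the double sum by cycles yields $g'(0^+;q_0)=\sum_j t_j\Delta_j$, where $\Delta_j$ is the ``agree plus zero minus oppose'' weight sum of $C_j$. Since $\sum_j t_j\Delta_j<0$ and every $t_j>0$, some $\Delta_j<0$; unwinding the definitions, $\Delta_j<0$ is exactly the asserted negative-cycle condition for the directed cycle $C_j$.

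The main obstacle is the conformal decomposition step together with its bookkeeping: I must guarantee strictly positive coefficients and conformality so that the absolute-value contributions from edges outside $\supp p$ add without cancellation, and I must verify the sign identity $\sign(p(e))\chi_{C_j}(e)=\pm1\leftrightarrow$ agree/oppose that translates the analytic inequality $\Delta_j<0$ into the graph-theoretic statement. Everything else reduces to convexity and a routine one-sided derivative computation.
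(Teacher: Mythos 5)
Your proof is correct, and while the ``if'' direction coincides with the paper's (perturb $p$ by a small positive multiple of $\chi_C$; your explicit one-sided derivative computation merely makes quantitative what the paper asserts directly), your ``only if'' direction takes a genuinely different route. The paper picks, by compactness, an optimal roadmap $\tilde p$ minimizing $\|\tilde p-p\|_{1,d}$, extracts a \emph{single} directed cycle from $\supp(\tilde p-p)$ via the indegree/outdegree observation for circulations, and argues by contradiction: if that cycle were not ``negative'' in the sense of the theorem, subtracting a small multiple of it from $\tilde p$ would produce another optimal roadmap strictly closer to $p$. You instead take an arbitrary cheaper roadmap $p^*$, bound the one-sided directional derivative of $q\mapsto\|p+q\|_{1,d}$ at $0$ in the direction $q_0=p^*-p$ using convexity, decompose the circulation $q_0$ \emph{conformally} into directed cycles, and conclude by pigeonhole that some cycle has negative marginal cost $\Delta_j$. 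The conformal decomposition is legitimate (it is obtained by iterating exactly the cycle-extraction step the paper performs once), and conformality is indeed the essential point making the absolute values on edges outside $\supp p$ additive; your sign identity $\sign(p(e))\chi_{C}(e)=\pm1$ correctly translates $\Delta_j<0$ into the graph-theoretic condition. What your route buys is the elimination of both the compactness/minimality selection of $\tilde p$ and the delicate ``refine the choice of $\delta$'' sign bookkeeping in the paper's argument; it is the classical negative-cycle criterion argument for minimum-cost flows and gives the slightly stronger conclusion that \emph{every} improving direction $p^*-p$ contains a violating cycle in its conformal decomposition. What the paper's route buys is economy: only the existence of one directed cycle in the support of a nonzero circulation is needed, not a full decomposition theorem.
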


\begin{proof}[Proof of Theorem \ref{T:Tolstoi}] The ``if'' direction. Assume that there is such an oriented
cycle $c$. We identify it with $\chi_c$ defined in
\eqref{E:SignInd} and consider it as an element of
$\ell_{1,d}(E)$. The assumption on $c$ implies that, for
sufficiently small $\alpha>0$, one has $\|p+\alpha
c\|_{1,d}<\|p\|_{1,d}$. On the other hand, since $c$ is an
oriented cycle, it is clear that $\pr_{p+\alpha c}=\pr_p=f$. Thus,
 the roadmap $p$ is not optimal.
\medskip

To prove the ``only if'' part of Theorem \ref{T:Tolstoi}, assume
that there are cheaper than $p$ roadmaps for $f$. The set of such
roadmaps includes optimal roadmaps for $f$. By compactness, among
the optimal roadmaps, there is a roadmap $\tilde p$ which
minimizes $\|\tilde p-p\|_{1,d}$. If such $\tilde p$ is not
unique, we pick one of them.

As $d:=\tilde p-p$  is a difference of two roadmaps for the same
transportation problem, it is a roadmap for the null problem, that
is, the transportation problem in which nothing is available and
nothing is needed. Using the orientation induced by $d,$ we create
an oriented graph $(X,E_d)$ with the same vertex set as $G$ and
with edge set being $\supp d$.

Since $\pr_d=0$, it is clear that each vertex with nontrivial
indegree has a nontrivial outdegree in $(X,E_d)$. Hence, this
oriented graph has an oriented cycle $c$. We are going to prove
that this cycle satisfies the condition in Theorem
\ref{T:Tolstoi}.

Assume the contrary. Then the following condition for $c$ holds:

(*) The total weight of edges of $c$, whose directions are
opposite to the induced by $p$ does not exceed the total weight of
the remaining edges in $c$.

We identify $c$ with $\chi_c$ and consider it as a roadmap.  The
choice of $c$ implies that for each $\delta\in(0,\min_{e\in
c}d(e))$, there holds $\|d-\delta c\|_{1,d}<\|d\|_{1,d}$, or,
equivalently,

\begin{equation}\label{E:Decr} \|(\tilde p-\delta c)-p\|_{1,d}<\|\tilde p-p\|_{1,d}.
\end{equation}

Selecting arbitrary $\delta\in(0,\min_{e\in c}d(e))$, set $\hat
p=\hat p_\delta:=\tilde p-\delta c$. Now, we refine the choice of
$\delta$ by showing that there exists $\delta \in(0,\min_{e\in
c}d(e))$ for which $\|\hat p\|_\tc\le \|\tilde p\|_\tc$. Indeed,
the definition of $c$ implies that the signs of $c$ and $\tilde
p$, considered as vectors of $\ell_{1,d}(E)$, can differ only on
edges where the sign of $c$ coincides with the sign of $-p$. By
condition (*), the total length of all such edges in $c$ does not
exceed half of length of $c$. Bearing this in mind, opt for
$\delta \in(0,\min_{e\in c}d(e))$ which does not exceed the
minimum absolute value of $\tilde p(e)$ over all of those edges
$e$ on which the signs of $\tilde p$ and $c$ coincide. Then, for
this $\delta,$ one has:
 $\|\hat p\|_{1,d}=\|\tilde p-\delta c\|_{1,d}\le \|\tilde
p\|_{1,\delta}$.

Furthermore, for any such $\delta$, roadmap
 $\hat p$ is a roadmap for the original transportation
problem as we subtracted a roadmap for a null problem, and, by
virtue of \eqref{E:Decr}, one has:
 $\|\hat p-p\|_{1,d}<\|\tilde p-p\|_{1,d}$.

Thus, $\hat p$ is also an optimal roadmap for $f$, and it
satisfies  $\|\hat p-p\|_{1,d}<\|\tilde p-p\|_{1,d}$, contrary to
our choice of $\tilde p$.\end{proof}

\section{Directed graphs related to transportation problems and Lipschitz
functions}\label{S:Digr}

The dual space of $\tc(X)$ is the space $\lip_0(X)$ of Lipschitz
functions on $X$ which vanish at a base point (an arbitrarily
chosen point in $X$), see \cite[Section 10.2]{Ost13}. Let
$f\in\tc(X)$ and $l\in\lip_0(X)$ be such that $\lip(l)=1$ and
$l(f)=\|f\|_\tc$, where by $l(f)$ we mean the result of the action
of a functional $l$ on a  vector $f$. If
$f=a_1(\1_{x_1}-\1_{y_1})+a_2(\1_{x_2}-\1_{y_2})+\dots+
a_n(\1_{x_n}-\1_{y_n}),\quad a_i>0,$ is an optimal transportation
plan for $f$,  then \[\sum_{i=1}^n
a_i(l(x_i)-l(y_i))=\|f\|_\tc=\sum_{i=1}^n a_i d(x_i,y_i),\] whence
\begin{equation}\label{E:Potential} l(x_i)-l(y_i)=d(x_i,y_i)\end{equation} for each transportation pair
$x_iy_i$.

A transportation plan satisfying condition \eqref{E:Potential} for
some $1$-Lipschitz function $l$ is called \emph{potential} with
\emph{potential} $l$.

\begin{observation}[Kantorovich and Gavurin \cite{KG49}]\label{O:KG49}
A transportation plan is optimal if and only if it is potential.
\end{observation}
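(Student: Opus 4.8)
The plan is to prove both directions of Observation~\ref{O:KG49} by exploiting the duality between $\tc(X)$ and $\lip_0(X)$, which guarantees that the transportation cost norm is computed by a norming Lipschitz functional. The forward implication (``optimal $\Rightarrow$ potential'') has essentially already been carried out in the paragraph immediately preceding the statement: if a transportation plan $\sum_i a_i(\1_{x_i}-\1_{y_i})$ is optimal with cost $\|f\|_\tc$, then choosing a norming $l\in\lip_0(X)$ with $\lip(l)=1$ and $l(f)=\|f\|_\tc$ forces
\[
\sum_{i=1}^n a_i\bigl(l(x_i)-l(y_i)\bigr)=\|f\|_\tc=\sum_{i=1}^n a_i\, d(x_i,y_i).
\]
Since each summand satisfies $l(x_i)-l(y_i)\le \lip(l)\,d(x_i,y_i)=d(x_i,y_i)$ by the Lipschitz condition, and the two sums are equal while all $a_i>0$, each individual inequality must be an equality, yielding \eqref{E:Potential}. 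Thus the plan is potential with potential $l$. The only point requiring care here is the existence of the norming functional, which is precisely the statement that $\lip_0(X)=\tc(X)^*$ together with the fact that on a finite-dimensional space the supremum in the dual norm is attained.

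For the converse (``potential $\Rightarrow$ optimal''), I would run a direct inequality argument. Suppose a transportation plan $P=\sum_i a_i(\1_{x_i}-\1_{y_i})$ for $f$ satisfies \eqref{E:Potential} for some $1$-Lipschitz $l$. Its cost is $\sum_i a_i\, d(x_i,y_i)=\sum_i a_i\bigl(l(x_i)-l(y_i)\bigr)=l(f)$, where the last equality uses that $l$ is linear on $\tp(X)$ and that $f=\sum_i a_i(\1_{x_i}-\1_{y_i})$. Now for \emph{any} competing transportation plan $Q=\sum_j b_j(\1_{u_j}-\1_{w_j})$ for the same $f$, the $1$-Lipschitz bound gives
\[
l(f)=\sum_j b_j\bigl(l(u_j)-l(w_j)\bigr)\le \sum_j b_j\, d(u_j,w_j),
\]
so the cost of $P$ equals $l(f)$ and is a lower bound for the cost of every plan $Q$. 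Hence $P$ achieves the infimum and is optimal. The crucial observation enabling the second-to-last step is that $l(f)$ depends only on $f$, not on the chosen plan, because $l$ acts linearly on the common problem $f$; this is what makes $l(f)$ a universal lower bound for costs.

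The main obstacle, such as it is, is bookkeeping rather than conceptual: I must make sure the functional $l$ appearing in the two directions is handled consistently and that linearity of $l$ over $\tp(X)$ is applied correctly, since $l$ is a priori only a function on points of $X$ extended to $\tp(X)$ via $l\bigl(\sum_i a_i(\1_{x_i}-\1_{y_i})\bigr)=\sum_i a_i\bigl(l(x_i)-l(y_i)\bigr)$. I would verify once that this extension is well defined (independent of the representation of $f$), which follows because any two representations differ by an element expressible through telescoping along edges, on which $l$ vanishes by its definition as a difference of point-values. With that linearity in hand, both directions reduce to the single Lipschitz inequality $l(x)-l(y)\le d(x,y)$ being tight exactly on the transportation pairs, and the equivalence follows without further machinery.
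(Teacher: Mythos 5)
Your proof is correct and follows essentially the same route as the paper: the ``optimal $\Rightarrow$ potential'' direction is exactly the norming-functional argument the paper carries out in the paragraph preceding the statement, and the converse is the standard weak-duality bound $l(f)\le\sum_j b_j d(u_j,w_j)$ that the paper leaves implicit. Your care about the well-definedness of $l(f)$ is harmless but unnecessary, since $l(f)=\sum_{v\in X}f(v)l(v)$ is just the evaluation of the functional on the vector $f$ and is independent of any chosen representation by bilinearity of this pairing.
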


According to the preceding discussion, we consider optimal (=
minimal cost) roadmaps for $f\in\tc(X)$ as elements of
$\ell_{1,d}(E)$ and define the \emph{support} of a roadmap as the
support of the corresponding element of $\ell_{1,d}(E)$. Let us
show that there always exists an optimal roadmap $p$ for $f$ with
the largest possible support among the roadmaps for $f$, whence
the largest possible support of a roadmap is uniquely determined
by $f$. In fact, any two optimal for $f$ roadmaps, $p_1$ and
$p_2$, cannot be represented by elements of $\ell_{1,d}(E)$ having
different signs on any of the edges, because otherwise, the
element $\frac12(p_1+p_2)\in \ell_{1,d}(E)$ would be also a
roadmap for $f$ with a strictly smaller cost. Thus, considering in
$\ell_{1,d}(E)$ an average of a collection of optimal roadmaps
covering all edges belonging to the support of at least one of the
optimal roadmaps, one obtains an optimal roadmap with the maximal
possible support.

For an optimal roadmap $p$ with the maximal possible support,
consider the following directed graph, which we call the
\emph{directed graph of $f$}. It contains only those edges of $E$
that are in the support of $p$. Further, the edges with positive
value of $p$   have the reference orientation, while, for edges
where the value of $p$ is negative, the orientation is the
opposite-to-reference one. To put differently, the directions of
all edges coincide with the directions of transportation according
to the roadmap $p$.

Our next purpose  is to provide a convenient description for the
obtained class of directed subgraphs of $(X,E)$. This will be done
in terms of the definition below.

\begin{definition}\label{D:DownGr} Let $l$ be a $1$-Lipschitz function on $G=(X,E)$.
Consider the set of edges $uv\in E$ for which
$|l(u)-l(v)|=d(u,v)$, such an edge is called {\it supported by}
$l$. A \emph{downhill graph} for $l$ is  the directed graph
consisting of all supported by $l$ edges directed from the vertex
where $l$ is larger to the vertex where $l$ is smaller.
\end{definition}

By Observation \ref{O:KG49} and the remark  on roadmaps with the
maximal possible support preceding Definition \ref{D:DownGr}, the
directed graph of a transportation problem is a subgraph of a
downhill graph for some $1$-Lipschitz function.

\begin{theorem}\label{T:DGTPvsDH} A directed subgraph of $(X,E)$ with at least one edge is a downhill graph for a $1$-Lipschitz function
if and only if it is the directed
graph for some non-zero transportation problem.
\end{theorem}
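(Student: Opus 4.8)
The plan is to prove the two implications separately, using throughout the duality between $\tc(X)$ and the $1$-Lipschitz functions (Observation \ref{O:KG49}) and the identity \eqref{E:DF}, and to reserve a linear-programming argument for the single place where it seems unavoidable. Write $\Delta(l)$ for the downhill graph of a $1$-Lipschitz function $l$. For the ``downhill graph $\Rightarrow$ directed graph'' direction I would start from an $l$ whose downhill graph is the given directed subgraph $H$ (with at least one edge) and manufacture a transportation problem forcing exactly $H$. Concretely, let $p\in\ell_{1,d}(E)$ be supported precisely on the edges of $H$, with $p(e)$ positive when the reference orientation of $e$ agrees with the downhill orientation and negative otherwise (the magnitudes are irrelevant), and put $f=\pr_p$. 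Since every edge of $H$ is supported by $l$ and oriented downhill, a termwise computation gives $l(f)=\sum_{e\in\supp p}p(e)\bigl(l(\ta(e))-l(\he(e))\bigr)=\sum_{e\in H}|p(e)|\,d(e)=\|p\|_{1,d}$. As $l$ is $1$-Lipschitz, this already yields $\|f\|_\tc\ge l(f)=\|p\|_{1,d}\ge\|f\|_\tc$, so $f\neq 0$, $p$ is optimal, and $l$ is a potential for $f$. Applying the same termwise estimate to an arbitrary optimal roadmap $p'$ for $f$ forces equality $p'(e)\bigl(l(\ta(e))-l(\he(e))\bigr)=|p'(e)|\,d(e)$ on each $e\in\supp p'$; hence every edge used by $p'$ is supported by $l$ and oriented downhill, i.e. $\supp p'\subseteq H$ with matching orientation. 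Thus the maximal-support optimal roadmap has support exactly $H$, and the directed graph of $f$ coincides with $H$.

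For the converse I would fix the directed graph $H$ of a non-zero transportation problem $g$, realized by a maximal-support optimal roadmap $p$, and note first that the potentials for $g$ are \emph{exactly} the $1$-Lipschitz functions that are tight and downhill on every edge of $H$: one inclusion is the termwise equality above, and the other is the direct evaluation $l(g)=\sum_{e\in H}|p(e)|\,d(e)=\|g\|_\tc$. In particular this set is a nonempty polytope, and the same termwise equality refines the observation recorded just after Definition \ref{D:DownGr} to: \emph{every} potential $l$ satisfies $H\subseteq\Delta(l)$. What must be upgraded is the reverse inclusion --- I need a \emph{single} potential $l$ with $\Delta(l)\subseteq H$, equivalently with no edge outside $H$ becoming supported. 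Writing $T$ for the set of edges supported by every potential (equivalently, the supported edges at a relative-interior potential), the orientation of these forced edges agrees with that of $H$, so it suffices to prove the equality $T=H$; the inclusion $H\subseteq T$ is immediate, and then any relative-interior potential $l$ gives $\Delta(l)=T=H$.

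The main obstacle is therefore the remaining inclusion $T\subseteq H$: an edge supported by all potentials must actually be used by some optimal roadmap. That some genuine argument is needed is shown already by $K_3$, where certain potentials support strictly more edges than $H$, so one cannot simply pick an arbitrary potential. I would settle $T\subseteq H$ by casting the transportation cost as a linear program --- minimize $\sum_{e}d(e)\bigl(p^+(e)+p^-(e)\bigr)$ over $p^\pm\ge 0$ subject to $D(p^+-p^-)=-g$, whose feasible dual vectors are precisely the $1$-Lipschitz functions (this is just \eqref{E:DF} read through the incidence matrix $D$) and whose optimal duals are the potentials --- and invoking the existence of a strictly complementary pair of optimal solutions (Goldman--Tucker). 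Strict complementarity makes the supported edges of the resulting dual $l^*$ coincide with $\supp p^*$, so the chain $\supp p^*\subseteq H\subseteq T\subseteq\{\text{edges supported by }l^*\}=\supp p^*$ collapses to equalities; in particular $\Delta(l^*)=H$, which simultaneously yields $T=H$ and exhibits $H$ as a downhill graph. (A purely combinatorial proof of $T\subseteq H$ via flow augmentation along cycles in $\supp p$ is possible but messier, and I would fall back on it only if the LP phrasing were to be avoided.) With this inclusion in hand both implications are complete.
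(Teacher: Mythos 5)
Your proof is correct, but the second implication follows a genuinely different route from the paper's. The ``downhill graph $\Rightarrow$ directed graph'' direction is essentially the paper's argument: both construct a problem supported exactly on $H$ and use the potential characterization (Observation \ref{O:KG49}, via termwise equality in $l(f)\le\|f\|_{\tc}\le\|p\|_{1,d}$) to show no optimal roadmap can use an edge outside $H$. For the converse, however, the paper does not use linear programming: it proves two combinatorial lemmas --- Lemma \ref{L:InOnEComp} (a downhill edge with both ends in one component of $(X,T_f)$ must lie in $T_f$, shown by rerouting $\ep$ units of flow around a cycle) and Lemma \ref{L:BetwComp} (for each edge between distinct components there is a supporting function that is slack on it, obtained by showing the induced order on components is acyclic and then shifting the function by $\delta/2$ on a down-set) --- and then averages the functions $s_e$ over all inter-component edges to get a single potential whose downhill graph is exactly $T_f$. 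Your strict-complementarity (Goldman--Tucker) argument replaces all of that machinery in one stroke and correctly identifies the key issue (a generic potential may support edges outside $T_f$, as in $K_3$); the collapse $\supp p^*\subseteq H\subseteq T\subseteq\{\text{edges tight for }l^*\}=\supp p^*$ is sound, granted the routine check that at optimality at most one of $p^{*+}(e),p^{*-}(e)$ is positive. What the paper's longer route buys is that it is self-contained and yields Theorem \ref{T:Unique} (the supporting function is unique iff $(X,T_f)$ is connected, and is determined componentwise up to constants otherwise) as a byproduct; what yours buys is brevity, at the cost of importing an LP black box and of some sign bookkeeping you should make explicit: with the paper's convention $(Dp)(v)=-\pr_p(v)$, the dual objective is $-l(g)$, so the optimal duals are the negatives of the potentials, which flips the downhill orientation consistently and is harmless but should be stated.
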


Prior to proving the Theorem, some auxiliary results will be
presented. The following definition comes in handy.

\begin{definition} Given  $0\neq f\in\tp(X)$, a {\it supporting function}  $s$  for $f$ is a function  $s\in\lip_0(X)$ 
 satisfying $\lip (s)=1$ and $s(f)=\|f\|_\tc$. \end{definition}

Denote by $T_f$ the subset of $E(X)$ consisting of all edges of
$E(X)$ which are contained in supports of some optimal roadmaps
for $f$. Trivially, $(X,T_f)$ is a \emph{spanning subgraph} of
$(X, E(X))$, that is, a subgraph containing all vertices of the
graph.

A necessary and sufficient condition for the uniqueness of a
supporting function is given by the forthcoming statement.

\begin{theorem}\label{T:Unique} A supporting function for $f$ is unique if and only if the graph $(X,T_f)$ is connected.
If the graph $(X,T_f)$ is disconnected, then a supporting function
for $f$ is uniquely determined on the connected component of
$(X,T_f)$ containing $O$, and is determined up to additive
constants on other connected components of $(X,T_f)$.
\end{theorem}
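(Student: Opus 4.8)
The plan is to read off the values of a supporting function along the edges of $(X,T_f)$ from the complementary-slackness identity \eqref{E:Potential}. First I would note that for every edge $uv\in T_f$ there is an optimal roadmap whose support contains $uv$, and that all optimal roadmaps agree in sign on each edge (as observed in the discussion preceding Definition \ref{D:DownGr}); thus the transportation direction along $uv$ is well defined. Applying \eqref{E:Potential} to this roadmap and to an arbitrary supporting function $s$ (Observation \ref{O:KG49} guarantees the induced plan is potential with potential $s$) shows that the difference of the values of $s$ across $uv$ equals $\pm d(u,v)$, with the sign fixed independently of $s$. Telescoping along any path inside a connected component of $(X,T_f)$ then shows that the difference of the values of $s$ at two vertices of one component is prescribed and does not depend on $s$. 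Hence any two supporting functions differ by an additive constant on each component; since $s(O)=0$ this constant is $0$ on the component $C_0$ containing the base point $O$. This already yields the statement for disconnected $(X,T_f)$ and, when $(X,T_f)$ is connected, uniqueness.

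For the converse I would prove that a disconnected $(X,T_f)$ actually possesses two distinct supporting functions, by perturbation. The key preliminary is the balance $\sum_{v\in C}f(v)=0$ for every component $C$: since the maximal optimal roadmap is supported on $T_f$, each of its edges joins two vertices of the same component, so summing \eqref{E:Problem} over $C$ cancels. Now take any union $U$ of components with $O\notin U$ and set $s_t=s+t\,\1_U$. Then $s_t(O)=0$ and $s_t(f)=s(f)+t\,f(U)=\|f\|_\tc$, while every Lipschitz constraint on an edge that does not cross between $U$ and its complement is untouched, and in particular all tight edges lying inside $U$ or inside $U^{c}$ survive. Thus $s_t$ is again a supporting function as soon as the edges crossing the boundary of $U$ still satisfy $|s_t(u)-s_t(v)|\le d(u,v)$; for small $t$ this can only fail at crossing edges that are tight, and one checks that shifting $U$ upward is safe precisely when no tight edge leaves $U$ in the downhill direction, and downward precisely when none enters. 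So it is enough to exhibit a union $U$ of components, avoiding $O$, that is a down-set or an up-set of the downhill graph.

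The main obstacle is precisely the existence of such a shiftable $U$, that is, ruling out that tight cross-edges freeze every component at once. If the downhill graph is disconnected, I would simply shift an entire downhill component missing $O$. The delicate case is a connected downhill graph with disconnected $(X,T_f)$, and here the crux is the claim that a tight edge joining two distinct components of $(X,T_f)$ must be incident to a trivial component --- a single vertex $x$ with $f(x)=0$ carrying no flow. Indeed, for a tight edge directed $u\to v$ between two flow-carrying components the balance lets me trace a supply vertex that reaches $u$ and a demand vertex reachable from $v$, each inside its own component; rerouting a small amount of flow across $u\to v$ then yields an optimal roadmap that uses this edge (it is downhill, hence the plan is still potential by Observation \ref{O:KG49}), so the edge would belong to $T_f$ --- a contradiction. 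The point I expect to be genuinely fiddly is the bookkeeping for trivial components serving as connectors: a trivial component is pinned only when it carries tight edges both entering and leaving, and one must verify that these cannot chain together to immobilize every component. I would organize this by contracting the components to super-vertices and arguing that the resulting digraph of tight cross-edges still admits a source or a sink super-vertex distinct from $C_0$, which furnishes the required $U$; making this last step airtight is where I expect most of the work to go.
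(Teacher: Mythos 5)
Your first half is exactly the paper's argument: an optimal roadmap supported on all of $T_f$ forces $s(x_j)-s(y_j)=d(x_j,y_j)$ on every edge of $T_f$, telescoping fixes $s$ up to a constant on each component, and $s(O)=0$ pins it down on the component of $O$. Your converse also has the right overall shape, and it is the shape of the paper's Lemma \ref{L:BetwComp}: perturb $s$ by a constant on a union $U$ of components avoiding $O$ (the balance $\sum_{v\in C}f(v)=0$ and your analysis of which tight cross-edges block an upward or downward shift are both correct), so everything reduces to producing a $U$ with no outgoing, or no incoming, tight downhill cross-edges.

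The gap is the claim you call the crux: it is \emph{false} that a tight edge joining two distinct components of $(X,T_f)$ must be incident to a trivial component. Take the unweighted path $a$--$b$--$c$--$d$ with $f=\1_a-\1_b+\1_c-\1_d$. The graph is a tree, so the roadmap for $f$ is unique and puts one unit on $ab$ and one on $cd$; hence $T_f=\{ab,cd\}$ and both components $\{a,b\}$ and $\{c,d\}$ carry flow, yet the supporting function $s(a)=0$, $s(b)=-1$, $s(c)=0$, $s(d)=-1$ makes the cross-edge $bc$ tight (and the downhill graph connected, so this is your ``delicate case''). Your rerouting argument for the claim cannot be repaired: to place flow on a single cross-edge $u\to v$ you must close a circulation returning from $v$'s component to $u$'s, and since every flow-carrying edge lies inside one component there is no way back; no optimal roadmap can use a cross-edge --- that is precisely what $uv\notin T_f$ means --- so no contradiction arises. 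What is actually needed, and what the paper proves, is acyclicity of the relation ``$C\prec C'$ if some tight cross-edge goes uphill from $C$ to $C'$'': a hypothetical directed cycle of tight cross-edges \emph{can} be closed up, by joining consecutive cross-edges with paths inside the components running through $T_f$, and pushing $\ep$ units around this closed walk (downhill on the cross-edges, cancelling existing flow where the walk runs uphill) yields another optimal roadmap whose support contains the cross-edges, contradicting their exclusion from $T_f$. Once acyclicity is in hand, your contracted digraph is a DAG, a source or sink super-vertex distinct from $C_0$ exists, and your shift finishes the proof; without it, your final step is unsupported, since a directed cycle of tight cross-edges hooked to $C_0$ could leave no shiftable $U$.
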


\begin{proof} Let $p$
be an optimal roadmap for $f$ whose support is $T_f$:
\[p=\sum_{j=1}^m a_j\1_{\overrightarrow{x_jy_j}},\quad a_j>0,\]
and $T_f=\{x_jy_j\}_{j=1}^m$. Then,
\[s(f)=\sum_{j=1}^ma_j(s(x_j)-s(y_j)).\]
On the other hand, the above sum is equal to
$\|f\|_\tc=\sum_{j=1}^m a_jd(x_j,y_j)$. Since $\lip (s)=1$, this
occurs if and only if $s(x_j)-s(y_j)=d(x_j,y_j)$ for each
$x_jy_j\in T_f$ yielding that $s$ is determined up to an additive
constant on each of the components of $(X,T_f)$. Taking into
account that $s(O)=0$,  one concludes that $s$ is uniquely
determined on the component of $(X,T_f)$ containing $O$.
\medskip

Our proof of the ``only if'' part of Theorem \ref{T:Unique} is
based on the following two lemmas.

\begin{lemma}\label{L:InOnEComp} Downhill edges for $s$, whose both ends belong to the same component
of $T_f$,    are edges of $T_f$.
\end{lemma}

\begin{proof} Assume that  $uv$ is an edge with $s(u)-s(v)=d(u,v)$ satisfying the conditions of the lemma.
Let $u=u_0,u_1,\dots,u_k=v$ be a path in $T_f$ joining $u$ and
$v$. Then, for each $i=1,\dots,k$, one has: $s(u_i)-s(u_{i-1})=\pm
d(u_i,u_{i-1})$. Now, consider the cycle $u_0,u_1,\dots,u_k,u_0$
together with
 a maximal-support roadmap $p$ for $f$. By the assumptions above,
the  support of $p$ contains all edges of this cycle except
$u_ku_0$. Denote by $\ep$ ($\ep>0$)  the minimal amount of product
transported according to the roadmap $p$ along the edges
$u_0u_1,u_1u_2,\dots,u_{k-1}u_k$ in the direction of decrease of
$s$.

Next, we create, for the same problem, a new roadmap $\tilde p$
from $p$ as follows. In the roadmap $\tilde p$, we move $\ep$
units of product along the edge $u_0u_k$ in the direction of
decrease of $s$. To modify $p$ only for edges in the cycle
$u_0,u_1,\dots,u_k,u_0$, we proceed as follows: selecting the
direction on the cycle which on $u_0u_k$ coincides with the
direction of decrease of $s$, we follow this orientation. From
here on,  for each edge on which the direction of the cycle and of
the decrease of $s$ coincide, we increase the amount of
transported product by $\ep$, while on each edge for which the
direction of decrease of $s$ and the direction of the cycle are
opposite, the amount of transportation  will be decreased by
$\ep$. It is easy to see that
\[\pr_p=\pr_{\tilde p}\quad\hbox{and}\quad \|p\|_{1,d}=\|\tilde
p\|_{1,d}.\] Therefore $\tilde p$ is a roadmap for the same
problem and $uv$ is in its support.
\end{proof}

\begin{lemma}\label{L:BetwComp} For each edge $e=uv$ joining
different components of $(X,T_f)$, there exists a supporting
function $s_e$ for $f$ such that $|s_e(u)-s_e(v)|\ne d(u,v)$.
\end{lemma}

\begin{proof} Only edges which are downhill for $s$ have to be considered.
Let $X_1,\dots,X_k$ be components of $(X,T_f)$. If there is $u\in
X_i$ and $v\in X_j$, $j\ne i$, such that $s(v)-s(u)=d(u,v)$, we
write $X_i\prec X_j$.

We start by proving that there are no cycles in this ordering of
components, that is, there are no finite subsets
\[\{n(1),\dots,n(m)\}\subset\{1,\dots,k\},  \quad\mathrm{where}\quad m\ge 2,\] such that
\begin{equation}\label{E:OrdCycle}X_{n(1)}\prec X_{n(2)}\prec\dots\prec X_{n(m)}\prec
X_{n(1)}.\end{equation}

Assume the contrary, that is, that there exists such a cycle. Pick
vertices $u_i\in X_{n(i)}$ and $v_i\in X_{n(i+1)}$ - bearing in
mind that $v_m\in X_{n(1)}$ - in such a way that
$s(v_i)-s(u_i)=d(u_i,v_i)$.

Denote by $W_i$  a path in $T_f$ joining $v_{i-1}$ and $u_i$,
where, by the agreement, $W_1$ is a path in $T_f$ joining $v_m$
and $u_1$.  Then,
\begin{equation}\label{W} u_1v_1W_2u_2v_2W_3u_3\dots v_{m-1}W_mu_mv_mW_1u_1\end{equation} is a
cycle. Observe that, with respect to the function $s$, all of the
edges $u_iv_i$ are going uphill. Let $p$ be the roadmap for $f$,
whose support is $T_f$, and let $\ep>0$ be the minimal amount of
the product moved according to $p$ over all edges of
$W_1,W_2,\dots, W_m$. Let us add to $p$ a plan of moving $\ep$
unit of products along the cycle \eqref{W}, in the direction which
contains edges $v_iu_i$. Notice, that such a plan is feasible
since all of the edges $v_iu_i$ have the same direction in the
cycle. This change of the plan will increase the resulting cost
 by $\ep\times$(the total length of all downhill
edges traversed in the downhill direction),  and simultaneously
will reduce this cost  by $\ep\times$(the total length of all
downhill edges traversed in the uphill direction). Our choice of
$\ep$ guarantees that the statement of the previous sentence
holds. Therefore, the cost of the newly constructed roadmap $\hat
p$ equals the cost of $p$, and hence $\hat p$  will be an optimal
roadmap for the same problem $f$. Consequently, all $u_iv_i$ are
edges in $T_f$, yielding a contradiction.
\medskip

Now, let $\overrightarrow{uv}$ be a downhill edge for $s$ for
which we are going  to prove Lemma \ref{L:BetwComp}. Let $X_1$ and
$X_2$ be the components of $(X,T_f)$ joined by $uv$; $u\in X_1$
and $v\in X_2$.

To construct the desirable function $s_e$, we introduce the
following definition. We say that a component $X_i$ is
\emph{reachable down} from $X_2$ if there is a set $\{n(1),\dots,
n(m)\}\subset \{1,\dots,k\}$ such that
\[X_i\prec X_{n(1)}\prec X_{n(2)}\prec\dots\prec X_{n(m)}\prec X_{2}.\]

We split all components of $(X,T_f)$ into the set $U$ of
components which are reachable down from $X_2$ and the set $V$ of
components which are not reachable down from $X_2$; we include
$X_2$ in $U$. By the nonexistence of cycles of the form
\eqref{E:OrdCycle}, $X_1$ is among those components which are not
reachable down from $X_2$, whence $X_1$ is in $V$.

The definition of being reachable down implies that there is a
$\delta>0$ such that, for each edge with end $w$ in $V$ and end
$z$ in $U$, one has $s(z)-s(w)<d(z,w)-\delta$.

In the case when $O\in V$, put
\begin{equation}\label{E:Modif_s}s_e(x)=\begin{cases} s(x) &\hbox{ if }x\in V,\\
s(x)+\frac{\delta}2&\hbox{ if }x\in U,\end{cases}\end{equation}
and when $O\in U$, put
\begin{equation}\label{E:Modif_s2}s_e(x)=\begin{cases} s(x)-\frac{\delta}2 &\hbox{ if }x\in V,\\
s(x)&\hbox{ if }x\in U.\end{cases}\end{equation}

In either case, $s_e$ is also a supporting function for $f$, while
for $s_e$ there are no downhill edges from $U$ to $V$ and from $V$
to $U$. This completes the proof.
\end{proof}

To derive Theorem \ref{T:Unique} from Lemma \ref{L:BetwComp},
consider the case of disconnected $(X,T_f)$, pick a supporting
function $s$ for $f$, and observe that we need to examine the two
cases: (1) There exists an edge $e$ between two distinct
components of $(X,T_f)$ which is downhill for $s$; (2) There are
no such edges.

Lemma \ref{L:BetwComp} proves Theorem \ref{T:Unique} in Case (1),
because $s_e$ is obviously different from $s$.

Case (2). Since all considered graphs are finite, there exists
$\omega>0$ such that, for any edge $uv$ between different
components of $(X, T_f)$, $|s(u)-s(v)|\le d(u,v)-\omega$. Let
$X_0$ be the component of $(X,T_f)$ containing $O$. Then the
function $\tilde s$ given by
\begin{equation}\label{E:Modif_s3}\tilde s(x)=\begin{cases} s(x) &\hbox{ if }x\in X_0,\\
s(x)+\frac{\delta}2&\hbox{ otherwise},\end{cases}\end{equation}

is another supporting function for $f$. This proves Theorem
\ref{T:Unique}.
\end{proof}

After establishing  all the necessary grounding, let us come back
to the proof of Theorem   \ref{T:DGTPvsDH}.

\begin{proof}[Proof of Theorem \ref{T:DGTPvsDH}] (a) Proof of ``only if'' part. Let $(X,E_l)$ be the downhill graph for a $1$-Lipschitz function $l$.
For each edge $\overrightarrow{uv}\in E_l$ consider the
transportation problem $\1_u-\1_v$. We add all such problems over
all $\overrightarrow{uv}\in E_l$ and get a transportation problem
\begin{equation}\label{E:TrPforEl} f=\sum_{\overrightarrow{uv}\in
E_l}\left(\1_u-\1_v\right),\end{equation} for which the right-hand
side of \eqref{E:TrPforEl} is an optimal transportation plan by
Observation \ref{O:KG49}.

This plan has the largest support among optimal transportation
plans for $f$, because, on one hand, $l(f)=\|f\|_\tc$, and, on the
other hand $|l(\1_x-\1_y)|<d(x,y)$ for any edge $xy$ which is not
in the plan \eqref{E:TrPforEl}.
\medskip

(b)  Proof of ``if'' part. Taking average of support functions
$s_e$ constructed in Lemma \ref{L:BetwComp} over all  edges $e$
between different components of $T_f$, we obtain a support
function $s_a$, all of whose downhill edges are within one of the
components of $(X,T_f)$. By Lemma \ref{L:InOnEComp}, $T_f$ is the
downhill graph of $s_a$.
\end{proof}

\section{Nonexistence of isometric copies of $\ell_\infty^k$ in $\tc(X)$ for
some finite graphs $X$}\label{S:Linfty}

By the Maurey-Pisier theorem \cite{MP76}, the problem of presence
of isometric copies of $\ell_\infty^k$ in transportation cost
spaces is closely related to the problem of determining the cotype
of transportation cost spaces, which is a significant open problem
for many metric spaces. See the survey \cite[Section
1.2.2]{Nao18}, where this problem is restated in terms of
universality.

Since $\ell_\infty^2$ is isometric to $\ell_1^2$, it is easy to
check that $\ell_\infty^2$ is isometrically contained in $\tc(X)$
for every $X$ with at least $4$ points. Alternatively, it can be
derived as  a consequence of Theorem \ref{T:2npts}. On the other
hand, if $X$ has $3$ points, it is isometric to $\ell_\infty^2$ if
and only if $X$ is a tree. See the result \eqref{I:Godard} in
Section \ref{S:Survey}.

If $X$ contains $C_4$ with distortion $1$, then, by virtue of the
observation in \cite{AFGZ21} saying that $\tc(C_4)=\ell_\infty^3$,
$\tc(X)$ contains $\ell_\infty^3$ isometrically.

To probe more in this direction, there are two known examples of
finite spaces containing $\ell_\infty^4$ isometrically.
Historically the first one  is a discrete subset of the unit
sphere of $\ell_\infty^4$ in its $\ell_\infty$ metric discovered
in \cite{KMO20}. The second one is the complete bipartite graph
$K_{4,4}$, see \cite{DKO21}.

This section purports to detect obstacles preventing an isometric
containment of $\ell_\infty^k$ in $\tc(X)$. To do so, we suppose
that $\ell_\infty^k$ admits an isometric embedding into $\tc(X)$
for a finite metric space $X$. Let $(X,E(X))$ be the canonical
graph of $X$ and let a sequence $\{e_i\}_{i=1}^k\subset \tc(X)$ be
isometrically equivalent to the unit vector basis of
$\ell_\infty^k$.

In the rest of this section, the elements $\{e_i\}$ will be
presented by their optimal roadmaps with an understanding that
such a presentation is not unique. As before, a reference
orientation of $E(X)$ is taken to be fixed.

To elaborate more on the number of optimal roadmaps for elements
$e_i,$ we introduce the following definition.

\begin{definition}\label{D:StrDisj} Two transportation problems are called \emph{strongly
disjoint} if the maximal supports of their optimal roadmaps in
$\ell_{1,d}(E)$ are disjoint. Equivalently, two transportation
problems are  \emph{strongly disjoint} if any two roadmaps for
them are disjoint as vectors in $\ell_{1,d}$.
\end{definition}

It is not difficult to see that any two transportation problems
$f,g\in \tc(X)$, where $X$ is a finite weighted graph, which are
isometrically equivalent to the unit vector basis of
$\{\ell_1^2\}$ are strongly disjoint. Indeed, if this does not
hold,  then either $\|f+g\|_\tc<\|f\|_\tc+\|g\|_\tc$ or
$\|f-g\|_\tc<\|f\|_\tc+\|g\|_\tc$.\medskip

This remark leads to the following statement.

\begin{observation}\label{O:CUSign} Any pair $f_j,~j=1,2$ of transportation problems of the form
$\sum_{i=1}^ka_{i,j}e_i$, for which  $|a_{i,j}|\le 1$ and also
$1=a_{l,1}a_{l,2}=-a_{k,1}a_{k,2}$ for some pair of indices $k,l$,
is strongly disjoint.
\end{observation}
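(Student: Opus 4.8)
The plan is to reduce everything to the two norm identities $\|f_1+f_2\|_\tc=\|f_1\|_\tc+\|f_2\|_\tc$ and $\|f_1-f_2\|_\tc=\|f_1\|_\tc+\|f_2\|_\tc$, and then to invoke the roadmap-level cancellation argument already sketched in the remark preceding this Observation. First I would exploit that $\{e_i\}_{i=1}^k$ is isometrically the unit vector basis of $\ell_\infty^k$, so that for any scalars $b_i$ one has $\|\sum_i b_i e_i\|_\tc=\max_i|b_i|$. Applied to $f_1$ and $f_2$ this gives $\|f_j\|_\tc=\max_i|a_{i,j}|$, turning the whole question into coordinatewise bookkeeping.

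Next I would decode the hypothesis at the two distinguished indices $l$ and $k$ (using the names from the statement; here $k$ is a local index, not the dimension). Since $|a_{i,j}|\le 1$ for all $i,j$, the condition $a_{l,1}a_{l,2}=1$ forces $a_{l,1}=a_{l,2}\in\{-1,1\}$, while $a_{k,1}a_{k,2}=-1$ forces $\{a_{k,1},a_{k,2}\}=\{-1,1\}$. In particular each of $f_1,f_2$ has a coefficient of absolute value $1$, so together with $|a_{i,j}|\le 1$ we get $\|f_1\|_\tc=\|f_2\|_\tc=1$. Using the $\ell_\infty^k$-norm formula again, $\|f_1+f_2\|_\tc=\max_i|a_{i,1}+a_{i,2}|\ge|a_{l,1}+a_{l,2}|=2$, while every $|a_{i,1}+a_{i,2}|\le 2$; hence $\|f_1+f_2\|_\tc=2$. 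Symmetrically, evaluating the difference at index $k$ gives $|a_{k,1}-a_{k,2}|=2$, so $\|f_1-f_2\|_\tc=2$. Thus both additivity identities hold, each witnessed by a different coordinate.

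Finally I would close with the strong-disjointness criterion. Suppose for contradiction that $f_1$ and $f_2$ are not strongly disjoint; then by Definition \ref{D:StrDisj} some optimal roadmap $p$ for $f_1$ and some optimal roadmap $q$ for $f_2$ share an edge $e$ in their supports, and by Remark \ref{R:CostRdmap} one has $\|p\|_{1,d}=\|f_1\|_\tc$ and $\|q\|_{1,d}=\|f_2\|_\tc$. Since $\pr$ is linear in the roadmap, $p-q$ is a roadmap for $f_1-f_2$ and $p+q$ is a roadmap for $f_1+f_2$. If $p(e)$ and $q(e)$ have the same sign, the cancellation at $e$ makes $\|p-q\|_{1,d}<\|p\|_{1,d}+\|q\|_{1,d}=2$, whence $\|f_1-f_2\|_\tc<2$, contradicting the identity just proved; if they have opposite signs, the same estimate for $\|p+q\|_{1,d}$ contradicts $\|f_1+f_2\|_\tc=2$. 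Either way we reach a contradiction, so $f_1$ and $f_2$ must be strongly disjoint. I expect this last step — converting a shared edge into a strict decrease of one of $\|f_1\pm f_2\|_\tc$ — to be the only genuinely nontrivial point, and it is precisely the mechanism recorded in the remark before this Observation; everything else is routine manipulation of the $\ell_\infty^k$-norm formula.
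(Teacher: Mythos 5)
Your argument is correct and is essentially the paper's own: the paper deduces from the same coordinate analysis that $f_1,f_2$ are isometrically equivalent to the unit vector basis of $\ell_1^2$ (i.e., $\|f_1\pm f_2\|_\tc=\|f_1\|_\tc+\|f_2\|_\tc$) and then cites the cancellation remark preceding the Observation, whose mechanism you have simply written out in full at the roadmap level. No gaps.
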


\begin{proof} Since $\{e_i\}_{i=1}^k$ is isometrically equivalent to the unit vector
basis of $\ell_\infty^k$,  problems $\{f_j\}_{j=1}^2$ are
isometrically equivalent to the unit vector basis of $\ell_1^2$,
the statement holds by the remark above.
\end{proof}

The following result on the number of optimal roadmaps for elements $\{e_i\}$ holds.

\begin{proposition}\label{P:ManyDisRM}
 For every $e_j$, there are at least $2^{k-2}$ disjoint optimal
roadmaps in $\ell_{1,d}(E(X))$.
\end{proposition}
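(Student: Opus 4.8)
The plan is to fix the index $j$ and to manufacture a large family of pairwise strongly disjoint transportation problems, each of norm $1$, all of which secretly contain $e_j$ as an average of an antipodal pair; recombining the optimal roadmaps of each antipodal pair will then produce the required optimal roadmaps for $e_j$ with pairwise disjoint supports.

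First I would, for each sign vector $\sigma=(\sigma_i)_{i\neq j}\in\{+1,-1\}^{k-1}$, form the transportation problem $g_\sigma=e_j+\sum_{i\neq j}\sigma_i e_i$. Since $\{e_i\}_{i=1}^k$ is isometrically equivalent to the unit vector basis of $\ell_\infty^k$, the coefficients $c_j=1$, $c_i=\sigma_i$ all have modulus $1$, so $\|g_\sigma\|_\tc=\max_i|c_i|=1$. The key claim is that the $2^{k-1}$ problems $\{g_\sigma\}$ are \emph{pairwise} strongly disjoint. To see this, for distinct $\sigma\neq\tau$ I would apply Observation \ref{O:CUSign} to the pair $f_1=g_\sigma$, $f_2=g_\tau$: all coefficients have modulus $1\le 1$; the coordinate $j$ carries $+1$ in both, so its coefficient product equals $+1$; and since $\sigma\neq\tau$ there is a coordinate $i\neq j$ with $\sigma_i\neq\tau_i$, giving coefficient product $-1$. (In applying Observation \ref{O:CUSign} one must rename its indices, since the letter $k$ appears there both as an index and, here, as the dimension.) Thus the hypotheses of Observation \ref{O:CUSign} hold and $g_\sigma,g_\tau$ are strongly disjoint.

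By Definition \ref{D:StrDisj}, strong disjointness means that any two optimal roadmaps for $g_\sigma$ and $g_\tau$ are disjoint as vectors of $\ell_{1,d}(E)$. Hence, selecting one optimal roadmap $p_\sigma$ for each $g_\sigma$, I obtain a family $\{p_\sigma\}$ whose supports are pairwise disjoint, each of cost $\|p_\sigma\|_{1,d}=\|g_\sigma\|_\tc=1$. Next I would partition the $2^{k-1}$ sign vectors into $2^{k-2}$ antipodal pairs $\{\sigma,-\sigma\}$, where $-\sigma$ negates every coordinate $i\neq j$, and for each pair set $r_\sigma:=\tfrac12\bigl(p_\sigma+p_{-\sigma}\bigr)$. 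Since $\pr$ is linear in the roadmap, $\pr_{r_\sigma}=\tfrac12(g_\sigma+g_{-\sigma})=e_j$, so $r_\sigma$ is a roadmap for $e_j$; and $\|r_\sigma\|_{1,d}\le\tfrac12(\|p_\sigma\|_{1,d}+\|p_{-\sigma}\|_{1,d})=1=\|e_j\|_\tc$, so $r_\sigma$ is optimal. Because the entire family $\{p_\sigma\}$ already has pairwise disjoint supports, the $2^{k-2}$ roadmaps $r_\sigma$ attached to distinct antipodal pairs have disjoint supports, yielding the claimed $2^{k-2}$ disjoint optimal roadmaps for $e_j$ (the case $k=2$, where $2^{k-2}=1$, being trivial).

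I expect the main obstacle to be the clean verification that the single hypothesis of Observation \ref{O:CUSign} is met \emph{simultaneously} for every pair $g_\sigma,g_\tau$, together with the care needed to pass from ``strongly disjoint problems'' to ``an entire family of roadmaps with pairwise disjoint supports'' and to confirm that averaging an antipodal pair lands exactly on $e_j$ without raising the cost. These are essentially bookkeeping points, but they are where a sign error or an off-by-one in the index count would enter; once they are pinned down, the counting argument giving $2^{k-2}$ is immediate.
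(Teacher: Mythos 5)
Your argument is correct and follows essentially the same route as the paper: both rest on Observation \ref{O:CUSign} applied to $\pm 1$-combinations of the $e_i$ and on linearly recombining optimal roadmaps of strongly disjoint problems. The only difference is organizational --- you pin the $j$-th coordinate at $+1$ and average antipodal pairs, whereas the paper pins the first coordinate at $+1$ and takes half-differences of pairs differing in the $j$-th coordinate (passing through the auxiliary problem with zero $j$-th coefficient); your version is marginally more streamlined, but the mechanism is identical.
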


\begin{proof} Given real numbers $\{a_i\}_{i=1}^k$,  set
\begin{equation}\label{E:2^{n-2}}
f(a_1,a_2,\dots,a_k)=\sum_{i=2}^ka_ie_i.
\end{equation}

For each collection $\theta_1,\theta_2,\theta_3,\dots,\theta_k$,
$\theta_i=\pm 1$, we pick some optimal roadmap
$p(\theta_1,\dots,\theta_k)$ for $f(\theta_1,\dots,\theta_k)$.
\medskip

By Observation \ref{O:CUSign}, problems
\[f(1,\theta_2,\dots,\theta_{j-1},1,\theta_{j+1},\dots,\theta_k)~\hbox{
and }~
f(1,\theta_2,\dots,\theta_{j-1},-1,\theta_{j+1},\dots,\theta_k)\]
are strongly disjoint. Their sum equals
$2f(1,\theta_2,\dots,\theta_{j-1},0,\theta_{j+1},\dots,\theta_k)$,
while their difference equals  $2e_j$. This implies that
\begin{equation}\label{E:RMe_j}
\frac12\left(p(1,\theta_2,\dots,\theta_{j-1},1,\theta_{j+1},\dots,\theta_k)-
p(1,\theta_2,\dots,\theta_{j-1},-1,\theta_{j+1},\dots,\theta_k)\right)\end{equation}
is an optimal roadmap for $e_j$ and
\begin{equation}\label{E:RMrest}
\frac12\left(p(1,\theta_2,\dots,\theta_{j-1},1,\theta_{j+1},\dots,\theta_k)+
p(1,\theta_2,\dots,\theta_{j-1},-1,\theta_{j+1},\dots,\theta_k)\right)\end{equation}
is an optimal roadmap for
$f(1,\theta_2,\dots,\theta_{j-1},0,\theta_{j+1},\dots,\theta_k)$
Strong disjointness of
$f(1,\theta_2,\dots,\theta_{j-1},1,\theta_{j+1},\dots,\theta_k)$
and
$f(1,\theta_2,\dots,\theta_{j-1},-1,\theta_{j+1},\dots,\theta_k)$
implies that the roadmaps \eqref{E:RMe_j} and \eqref{E:RMrest}
have the same support.

By Observation \ref{O:CUSign}, problems
\[f(1,\theta_2,\dots,\theta_{j-1},0,\theta_{j+1},\dots,\theta_k)~\hbox{
and
}~f(1,\tilde\theta_2,\dots,\tilde\theta_{j-1},0,\tilde\theta_{j+1},\dots,\tilde\theta_k)\]
are strongly disjoint for any two distinct $(k-2)$-tuples
$(\theta_2,\dots,\theta_{j-1},\theta_{j+1},\dots,\theta_k)$ and
$(\tilde\theta_2,\dots,\tilde\theta_{j-1},\tilde\theta_{j+1},\dots,\tilde\theta_k)$
consisting of $\pm1$. Therefore roadmaps \eqref{E:RMe_j} are
disjoint for distinct $(k-2)$-tuples
$(\theta_2,\dots,\theta_{j-1},\theta_{j+1},\dots,\theta_k)$.\end{proof}

Employing Proposition \ref{P:ManyDisRM} one arrives at:

\begin{theorem}\label{T:DimBound} If $\tc(X)$ contains an isometric copy of
$\ell_\infty^k$, then the canonical graph $G(X,E)$ of $X$ contains
vertices whose degrees are at least $2^{k-2}$. Furthermore, the
number of such vertices should be nontrivially large, namely, if
$\{e_i\}_{i=1}^k\subset \tc(X)$ are isometrically equivalent to
the unit vector basis of $\ell_\infty^k$, then each vertex
contained in a support of any of $\{e_i\}_{i=1}^k$ should have
degree at least $2^{k-2}$. \end{theorem}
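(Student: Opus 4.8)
The plan is to derive Theorem~\ref{T:DimBound} directly from Proposition~\ref{P:ManyDisRM}. The key observation is that Proposition~\ref{P:ManyDisRM} produces, for each fixed $e_j$, a family of at least $2^{k-2}$ optimal roadmaps that are \emph{pairwise disjoint} as vectors in $\ell_{1,d}(E(X))$. Each such roadmap is a nonzero element of $\ell_{1,d}(E(X))$, so it has nonempty support, and these supports are disjoint subsets of $E(X)$. My first step would be to fix an arbitrary vertex $w$ that lies in the support of $e_j$ (meaning $w$ is an endpoint of some edge in the maximal support of $e_j$), and argue that \emph{every} optimal roadmap for $e_j$ must use at least one edge incident to $w$. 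This is where the structure of transportation problems comes in: since $\pr_p = e_j$ is the same for every optimal roadmap $p$ of $e_j$, and $w$ carries a nonzero value of $\pr_{e_j}$ (or more carefully, $w$ is incident to an edge common to the maximal support), the balance condition \eqref{E:Problem} at $w$ forces every roadmap realizing $e_j$ to assign a nonzero value to at least one edge at $w$.

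The second step is the counting argument. Having established that each of the $2^{k-2}$ pairwise-disjoint optimal roadmaps for $e_j$ must use at least one edge incident to $w$, and that their supports are disjoint, I would conclude that there are at least $2^{k-2}$ distinct edges incident to $w$---one from each roadmap's support---and hence $\deg(w) \ge 2^{k-2}$. Since $w$ was an arbitrary vertex in the support of $e_j$, and $j$ was arbitrary, this yields the stronger statement: every vertex contained in the support of any $e_i$ has degree at least $2^{k-2}$. The first assertion of the theorem (existence of \emph{some} vertex of degree $\ge 2^{k-2}$) then follows immediately, since each $e_i$, being a nonzero transportation problem, has nonempty support.

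The main obstacle I anticipate is making precise the claim that every optimal roadmap for $e_j$ must use an edge incident to each vertex in the maximal support of $e_j$. The subtlety is that the \emph{maximal} support is the union of supports of all optimal roadmaps, so an individual optimal roadmap need not cover every vertex of the maximal support. The cleanest fix is to choose $w$ more carefully: rather than an arbitrary support vertex, I would select $w$ so that the transportation problem $e_j = \pr_{p}$ itself has $\pr_{e_j}(w)\neq 0$, i.e.\ $w$ is a genuine source or sink of $e_j$. At such a vertex the balance relation \eqref{E:Problem} guarantees that \emph{any} roadmap $p$ with $\pr_p = e_j$ satisfies $\sum_{\ta(e)=w}p(e)-\sum_{\he(e)=w}p(e)\neq 0$, forcing a nonzero edge-value at $w$. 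To reach \emph{all} support vertices and not merely sources and sinks, I would then invoke the disjointness of the $2^{k-2}$ roadmaps together with the fact (from the discussion preceding Definition~\ref{D:DownGr}) that the maximal-support optimal roadmap covers every edge in any optimal roadmap's support; combining these gives the degree bound at every vertex incident to an edge in the maximal support of $e_j$.

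Once this incidence lemma is in place, the rest is the elementary pigeonhole-style count described above, and no further estimates are needed. I would present the argument in the order: (i) produce the $2^{k-2}$ disjoint optimal roadmaps via Proposition~\ref{P:ManyDisRM}; (ii) prove the incidence lemma for vertices in the support; (iii) count edges at a fixed support vertex to obtain the degree bound; (iv) quantify over vertices and indices to obtain both the existence statement and the stronger uniform statement.
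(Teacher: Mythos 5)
Your core argument is exactly the one the paper intends (the paper states Theorem~\ref{T:DimBound} as an immediate consequence of Proposition~\ref{P:ManyDisRM} and writes out no further details): fix a vertex $w$ with $e_j(w)\neq 0$; by the balance relation \eqref{E:Problem}, every roadmap $p$ with $\pr_p=e_j$ must take a nonzero value on some edge incident to $w$; since Proposition~\ref{P:ManyDisRM} supplies $2^{k-2}$ optimal roadmaps for $e_j$ with pairwise disjoint supports, these contribute $2^{k-2}$ distinct edges at $w$, whence $\deg(w)\ge 2^{k-2}$. The existence statement follows because each $e_j$ is a nonzero element of $\tp(X)$ and so has at least one vertex in its support. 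This is correct and complete for the theorem as it is meant to be read: ``support of $e_i$'' is the support of $e_i$ as an element of $\tp(X)$, i.e.\ the set of vertices where the function $e_i$ is nonzero --- which is also the reading needed for Corollary~\ref{C:Diamond} and the corollary on recursive families, where one concludes that the problems $e_i$ live on the vertex set of the previous generation.

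The one step you should drop, or at least not rely on, is the attempted extension in your third paragraph to every vertex incident to an edge of the \emph{maximal} roadmap support $T_{e_j}$. The fact that a maximal-support optimal roadmap covers every edge occurring in any optimal roadmap does not imply that each of the $2^{k-2}$ individual disjoint roadmaps passes through a given vertex $w$ with $e_j(w)=0$: a single optimal roadmap can bypass such a $w$ entirely, so the pigeonhole count at $w$ breaks down, and the disjointness of the family gives you nothing there. Fortunately, nothing in the theorem (under the function-support reading) or in its applications requires this stronger claim, so the correct course is simply to omit that paragraph rather than to repair it.
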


Note that $\{e_i\}_{i=1}^k$ do not have to be disjointly
supported, see \cite[Section 8]{DKO21}.
\medskip

It is worthy to state the following immediate consequence of
Theorem \ref{T:DimBound}  because it is related to the Bourgain's
problem on cotype of $\tc(\mathbb{R}^2)$ which we mentioned in
Section \ref{S:Survey}, see also \cite{NS07} in this connection.

\begin{corollary}\label{C:Grid} The space $\tc(L_n)$, where $L_n$ is the plane
$n\times n$ grid, does not contain $\ell_\infty^5$ isometrically.
\end{corollary}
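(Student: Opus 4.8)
The plan is to apply Theorem \ref{T:DimBound} directly, specialized to $k=5$. By that theorem, if $\tc(L_n)$ were to contain an isometric copy of $\ell_\infty^5$, then the canonical graph of $L_n$ would have to contain a vertex whose degree is at least $2^{5-2}=8$. So the entire task reduces to computing the maximum degree of the canonical graph of the grid and checking that it falls below this threshold.

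First I would identify the canonical graph of $L_n$. The grid $L_n$ is a simple unweighted graph equipped with its graph distance, and it was observed in Section \ref{S:Survey} (more precisely, in the discussion of the canonical-graph construction in Section 1) that starting from a simple unweighted graph endowed with the graph distance, one recovers it as its canonical graph. Thus the canonical graph $G(L_n,E)$ is $L_n$ itself, with its usual edge set consisting of the horizontally and vertically adjacent pairs of grid points. This is the one point worth stating carefully, since the whole argument rests on it: I would note that no grid edge is deleted (an edge $uv$ of unit length admits no intermediate vertex $w$ with $d(u,w)+d(w,v)=d(u,v)$) and no new edge is created, so the canonical graph coincides with the grid.

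Next I would compute the maximum degree of $L_n$. In the $n\times n$ grid each vertex is joined only to its up, down, left, and right neighbors; hence interior vertices have degree $4$, boundary (non-corner) vertices have degree $3$, and corner vertices have degree $2$. In particular, the maximum degree of any vertex of $G(L_n,E)$ equals $4$.

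Finally I would invoke the degree bound: since $4<8=2^{5-2}$, the canonical graph of $L_n$ contains no vertex of degree at least $2^{5-2}$, which by the contrapositive of Theorem \ref{T:DimBound} is incompatible with $\tc(L_n)$ containing an isometric copy of $\ell_\infty^5$. Therefore $\tc(L_n)$ does not contain $\ell_\infty^5$ isometrically, as claimed. There is no substantive obstacle in this argument; the only thing requiring a moment's attention is the verification that the canonical graph is the grid itself, after which the conclusion is immediate from the already-established degree bound.
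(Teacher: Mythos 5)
Your proof is correct and is exactly the argument the paper intends: the corollary is stated as an immediate consequence of Theorem \ref{T:DimBound}, and your verification that the canonical graph of $L_n$ is the grid itself (maximum degree $4 < 8 = 2^{5-2}$) supplies precisely the detail the authors leave implicit.
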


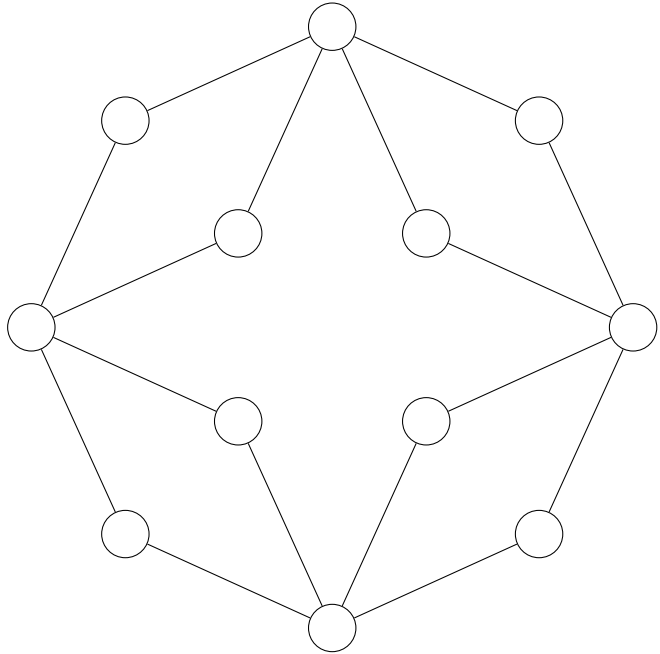
\begin{figure}
\begin{center}
{\begin{tikzpicture}
 [scale=.25,auto=left,every node/.style={circle,draw}]
\node (n1) at (16,0) {\hbox{~~~}};
 \node (n2) at (5,5)  {\hbox{~~~}};
 \node (n3) at (11,11)  {\hbox{~~~}};
 \node (n4) at (0,16) {\hbox{~~~}};
 \node (n5) at (5,27)  {\hbox{~~~}};
 \node (n6) at (11,21)  {\hbox{~~~}};
 \node (n7) at (16,32) {\hbox{~~~}};
 \node (n8) at (21,21)  {\hbox{~~~}};
 \node (n9) at (27,27)  {\hbox{~~~}};
\node (n10) at (32,16) {\hbox{~~~}};
 \node (n11) at (21,11)  {\hbox{~~~}};
  \node (n12) at (27,5)  {\hbox{~~~}};

  \foreach \from/\to in {n1/n2,n1/n3,n2/n4,n3/n4,n4/n5,n4/n6,n6/n7,n5/n7,n7/n8,n7/n9,n8/n10,n9/n10,n10/n11,n10/n12,n11/n1,n12/n1}
  \draw (\from) -- (\to);

\end{tikzpicture}} \caption{Diamond $D_2$.}\label{F:Diamond2}
\end{center}
\end{figure}

To formulate  the next corollary, let us remind the definition of
diamond graphs.

\begin{definition}\label{D:Diamonds} Diamond graphs $\{D_n\}_{n=0}^\infty$
are defined recursi\-ve\-ly: The {\it diamond graph} of level $0$
has two vertices joined by an edge of length $1$ and is denoted by
$D_0$. The {\it diamond graph} $D_n$ is obtained from $D_{n-1}$ in
the following way. Given an edge $uv\in E(D_{n-1})$, it is
replaced by a quadrilateral $u, a, v, b$, with edges $ua$, $av$,
$vb$, $bu$. (See Figure \ref{F:Diamond2}.)
\end{definition}

Apparently Definition \ref{D:Diamonds} was first introduced in
\cite{GNRS04}. We consider $D_n$ as a weighted graph - the weight
of each edge is $2^{-1}$. It is clear that with these weights the
vertex set of $D_{n-1}$ is naturally isometrically embeddable into
the vertex set of $D_n$. We call those vertices of $D_n$ which are
in the set $V(D_n)\backslash V(D_{n-1})$ \emph{vertices of the
$n$-the generation}. Observe that all vertices of the $n$-th
generation in $D_n$ have degree $2$.

\begin{corollary}\label{C:Diamond} The spaces $\{\tc(D_n)\}_{n=1}^\infty$ do not contain an isometric copy of
$\ell_\infty^4$.
\end{corollary}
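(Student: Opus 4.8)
The plan is to derive a contradiction from Theorem \ref{T:DimBound} applied with $k=4$, for which the relevant degree threshold is $2^{k-2}=4$. Suppose toward a contradiction that, for some $n\ge 1$, the space $\tc(D_n)$ contains an isometric copy of $\ell_\infty^4$, witnessed by $\{e_i\}_{i=1}^4\subset\tc(D_n)$ isometrically equivalent to the unit vector basis. The theorem then forces every vertex lying in the support (of an optimal roadmap) of any $e_i$ to have degree at least $4$ in the canonical graph $G(D_n)$. Thus the whole argument reduces to exhibiting in $G(D_n)$ an unavoidable supply of degree-$2$ vertices that the supports cannot escape.

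First I would pin down the canonical graph. Since $D_n$ carries the graph metric with a single common edge weight, adjacent vertices are at the minimal possible distance, so no third vertex can lie on a geodesic between them, while any two non-adjacent vertices have a geodesic through an intermediate vertex. Hence the deletion procedure defining the canonical graph leaves exactly the edges of $D_n$, i.e.\ $G(D_n)=D_n$. This identification is what licenses reading the degree hypothesis of Theorem \ref{T:DimBound} directly off the combinatorics of $D_n$.

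The combinatorial heart is the following observation about the recursive construction. When $D_n$ is formed from $D_{n-1}$ by replacing each edge $uv$ with the quadrilateral $u,a,v,b$, the two new $n$-th generation vertices $a,b$ each belong to a single such quadrilateral and therefore have degree exactly $2$; moreover every edge of $D_n$ is one of the four edges $ua,av,vb,bu$ of some quadrilateral, hence is incident to one of these degree-$2$ vertices. I would record this as the key lemma: \emph{every edge of $D_n$ has an endpoint of degree $2$.}

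Finally I would close the argument. For each nonzero $e_i$, an optimal roadmap is a nonzero element of $\ell_{1,d}(E(D_n))$, so its support contains at least one edge; by the previous step that edge has a degree-$2$ endpoint, which is then a vertex contained in the support of $e_i$ of degree $2<4$, contradicting Theorem \ref{T:DimBound}. Since $n\ge 1$ was arbitrary, the corollary follows. The only place demanding care—really the crux—is the combinatorial claim that the newest generation simultaneously consists of degree-$2$ vertices \emph{and} covers all edges of $D_n$; once that is secured, and once $G(D_n)$ is identified with $D_n$, the conclusion is a direct invocation of Theorem \ref{T:DimBound}.
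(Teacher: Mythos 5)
Your combinatorial lemma is correct --- every edge of $D_n$ ($n\ge 1$) has an $n$-th generation endpoint, those endpoints have degree $2$, and the canonical graph of $D_n$ is $D_n$ itself --- but the final step contains a genuine gap. You pass from ``the support of an optimal roadmap for $e_i$ contains an edge with a degree-$2$ endpoint $v$'' to ``$v$ is a vertex contained in the support of $e_i$'' in the sense required by Theorem \ref{T:DimBound}. These are not the same thing. The degree bound in that theorem comes from Proposition \ref{P:ManyDisRM}: there are $2^{k-2}$ pairwise disjoint optimal roadmaps for $e_i$, and a vertex $v$ is forced to meet all of them --- hence to have degree at least $2^{k-2}$ --- only when $e_i(v)\ne 0$, i.e.\ when $v$ is a source or sink of the transportation problem $e_i$ viewed as a function on the vertex set. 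A vertex through which one roadmap merely routes product (flow in equals flow out) need not be touched by the other disjoint roadmaps at all, so nothing forces its degree to be large. In $D_n$ this is exactly what happens: optimal roadmaps necessarily traverse degree-$2$ vertices of the $n$-th generation, and that is no contradiction.

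What the degree bound does give is that every vertex with $e_i(v)\ne 0$ avoids the $n$-th generation and hence lies in $V(D_{n-1})$; since $V(D_{n-1})$ embeds isometrically into $D_n$, the elements $e_i$ then lie in the isometric copy of $\tc(D_{n-1})$ inside $\tc(D_n)$, so $\tc(D_{n-1})$ also contains $\ell_\infty^4$ isometrically. Iterating this descent down to $D_1$, where \emph{every} vertex has degree $2<4=2^{4-2}$ and the nonempty vertex support of a nonzero $e_i$ must contain such a vertex, yields the contradiction. This induction on $n$ is the step your argument is missing; without it, the observation that every edge of $D_n$ touches a degree-$2$ vertex proves nothing, because Theorem \ref{T:DimBound} says nothing about pass-through vertices of roadmaps.
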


\begin{proof} Suppose that $\tc(D_n)$ contains $\ell_\infty^4$ isometrically, and $\{e_j\}_{j=1}^4\subset\tc(D_n)$ are isometrically equivalent to the unit
vector basis of $\ell_\infty^4$. By Proposition \ref{P:ManyDisRM},
each vertex belonging to support of one of $\{e_j\}_{j=1}^4$ has
degree at least $4=2^{4-2}$. As observed in the paragraph above,
none of these vertices is of the $n$-th generation, and all of
them are in $V(D_{n-1})$. Therefore, $\tc(D_{n-1})$ also contains
$\ell_\infty^4$ isometrically. Eventually,  we arrive at a
contradiction because the maximal degree of vertices of $D_1$
equals  $2$.
\end{proof}

\begin{remark} Corollary \ref{C:Diamond} is sharp: by the observation of \cite{AFGZ21} mentioned at the beginning of this section, $\tc(D_n)$ contains an isometric copy of $\ell_\infty^3$
for every $n\in \mathbb{N}$.
\end{remark}

It is easy to see that a similar argument can be used for
recursive sequences of graphs introduced by Lee and Raghavendra
\cite{LR10}:

\begin{definition}\label{D:Comp}
Let $H$ and $G$ be two finite connected directed graphs having
distinguished vertices which we call {\it top} and {\it bottom},
respectively. The {\it composition} $H\oslash G$ is obtained by
replacing each  edge $\overrightarrow{uv}\in E(H)$ by a copy of
$G$, the vertex $u$ is identified with the bottom of $G$ and the
vertex $v$ is identified with the top of $G$. Directions of edges
in $H\oslash G$ are inherited from $G$. The {\it top} and {\it
bottom} of the obtained graph are defined as the top and bottom of
$H$, respectively.
\end{definition}

When we consider these graphs as metric spaces we use the graph
distances of the underlying undirected weighted graphs - that is,
we ignore the directions of edges.

Defining metrics on such graphs we make the following
normalization. We assume that the distance between the top and
bottom of $G$ is equal to $1$. When we replace an edge $e\in E(H)$
by a copy of $G$, we multiply all weights of edges in this copy of
$G$ by $w(e)$. This normalization is chosen because under this
normalization the natural embedding of $V(H)$ into $V(H\oslash G)$
is isometric.
\medskip

Let $B$ be a connected weighted finite simple directed graph
having two distinguished vertices, which we call {\it top} and
{\it bottom}, respectively. Assume that the distance between the
top and bottom in $B$ is $1$. We use $B$ to construct a recursive
family of graphs as follows:

\begin{definition}\label{D:B_n} We say that  graphs $\{B_n\}_{n=0}^\infty$ are defined by {\it recursive
composition} or that $\{B_n\}_{n=0}^\infty$ is a {\it recursive
sequence} or {\it recursive family} of graphs if:

\begin{itemize}

\item A  graph $B_0$ consists of one directed edge of length $1$
with {\it bottom} being the initial vertex and {\it top} being the
terminal vertex.

\item $B_n=B_{n-1}\oslash B$.

\end{itemize}
\end{definition}

The  weights of edges in $B_n$ are defined  as described above.
Now, one can formulate another  corollary of Theorem
\ref{T:DimBound}:

\begin{corollary} Let $\Delta$ be the maximum degree of $B$. Then
the spaces $\{\tc(B_n)\}_{n=0}^\infty$ do not contain
$\ell_\infty^k$ isometrically for $k>\log_2\Delta+2$.
\end{corollary}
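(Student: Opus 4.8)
The plan is to imitate the proof of Corollary~\ref{C:Diamond} almost verbatim, the only change being that the threshold $2=2^{4-2}$ is replaced by $\Delta$. Indeed, the hypothesis $k>\log_2\Delta+2$ is precisely the inequality $2^{k-2}>\Delta$, and this is exactly the numerical gap that the degree bound of Theorem~\ref{T:DimBound} needs in order to bite. I would argue by downward induction on $n$, the sole external input being Theorem~\ref{T:DimBound}; the base case is $B_1=B_0\oslash B=B$, and the trivial case $n=0$ (where $\tc(B_0)=\mathbb{R}$) needs no comment since here $k\ge 3$.

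For the induction step, suppose toward a contradiction that $\tc(B_n)$ contains an isometric copy of $\ell_\infty^k$ witnessed by $\{e_j\}_{j=1}^k$. First I would split the vertices of $B_n=B_{n-1}\oslash B$ into the \emph{old} ones, forming the isometrically embedded copy of $V(B_{n-1})$, and the \emph{new} ones, namely the vertices interior to (i.e.\ distinct from the top and bottom of) the copies of $B$ that replace the edges of $B_{n-1}$. A new vertex $w$ lies in exactly one such copy and, since distinct copies meet only at old vertices, is adjacent in $B_n$ only to vertices of that copy; hence its degree in $B_n$ equals its degree in $B$ and is at most $\Delta<2^{k-2}$. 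Theorem~\ref{T:DimBound} then forbids $w$ from belonging to the support of any $e_j$, so every support vertex of every $e_j$ is old, i.e.\ lies in $V(B_{n-1})$. Because the normalization chosen in Definition~\ref{D:B_n} makes $V(B_{n-1})\hookrightarrow V(B_n)$ isometric, any transportation problem supported on $V(B_{n-1})$ has the same transportation cost whether computed in $\tc(B_{n-1})$ or in $\tc(B_n)$; consequently $\{e_j\}_{j=1}^k$ is still isometrically equivalent to the unit vector basis of $\ell_\infty^k$ inside $\tc(B_{n-1})$, contradicting the inductive hypothesis. The base case $B_1=B$ is disposed of directly: its maximum degree is $\Delta<2^{k-2}$, so the first assertion of Theorem~\ref{T:DimBound} already shows that $\tc(B)$ contains no isometric copy of $\ell_\infty^k$.

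The one step that requires genuine care---and which I expect to be the main obstacle---is that Theorem~\ref{T:DimBound} refers to degrees in the \emph{canonical} graph of $B_n$, not in the graph $B_n$ as drawn. I would resolve this by observing that the canonical graph is a subgraph of $B_n$: if $u,v$ are non-adjacent in $B_n$, a shortest $u$--$v$ path supplies an intermediate vertex $w$ with $d(u,w)+d(w,v)=d(u,v)$, so the edge $uv$ is discarded in the canonical construction, while no edges are ever added. Passing to the canonical graph can therefore only decrease degrees, so each new vertex still has canonical degree at most $\Delta<2^{k-2}$; thus the degree dichotomy used above remains valid and the induction goes through.
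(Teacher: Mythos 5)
Your proof is correct and follows essentially the same route as the paper's: use the degree bound of Theorem \ref{T:DimBound} (via Proposition \ref{P:ManyDisRM}) to force every support vertex of the $e_j$ into $V(B_{n-1})$, descend recursively, and reach a contradiction at $B_1=B$, whose maximum degree is $\Delta<2^{k-2}$. Your additional observation that the canonical graph of $B_n$ is a subgraph of $B_n$, so that passing to it can only decrease degrees, is a point the paper leaves implicit but does not alter the argument.
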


\begin{proof} Suppose $\tc(B_n)$ contains $\ell_\infty^k$ isometrically, and $\{e_j\}_{j=1}^k\subset\tc(B_n)$ are isometrically equivalent to the unit
vector basis of $\ell_\infty^k$. By Proposition \ref{P:ManyDisRM},
each vertex belonging to support of one of $\{e_j\}_{j=1}^k$ has
degree at least $2^{k-2}>2^{(\log_2\Delta+2)-2}=\Delta$. By the
definition of $B_n$, none of these vertices can belong to any
graph $B$ replacing an edge of $B_{n-1}$ in the last step of
construction of $B_n$, except the top or bottom of $B$ (degree can
be $>\Delta$ only at the top and bottom of $B$). Thus all vertices
in the support of $\{e_j\}_{j=1}^k$ belong to the vertex set of
$B_{n-1}$. Therefore $\tc(B_{n-1})$ also contains $\ell_\infty^k$
isometrically, and we can repeat the argument for $B_{n-1}$.
Eventually, we arrive at a contradiction because the maximal
degree of vertices of $B_1=B$ equals  $\Delta$.
\end{proof}

\begin{small}

\renewcommand{\refname}{\section{References}}

\end{small}

\textsc{Department of Mathematics, Atilim University, 06830
Incek,\\ Ankara, TURKEY} \par \textit{E-mail address}:
\texttt{sofia.ostrovska@atilim.edu.tr}\par\medskip

\textsc{Department of Mathematics and Computer Science, St. John's
University, 8000 Utopia Parkway, Queens, NY 11439, USA} \par
  \textit{E-mail address}: \texttt{ostrovsm@stjohns.edu} \par

\end{large}

\end{document}